\tikzstyle{decision} = [diamond, draw, fill=blue!20,
\tikzstyle{block} = [rectangle, draw, fill=blue!20,
\tikzstyle{line} = [draw, thick, color=black!50, -latex']
\tikzstyle{cloud} = [draw, ellipse,fill=red!20, node distance=2.5cm,
\newcommand{\N}{\mathbb N}
\begin{document}
\frontmatter          
\pagestyle{headings}  
\addtocmark{Hamiltonian Mechanics} 

\title{Permutations sorted by a finite \\and an infinite stack in series}

 \toctitle{Permutations sorted by a finite and an infinite stack in series}
  \tocauthor{Murray~Elder \and Yoong~Kuan~Goh}

\author{Murray Elder\inst{1}   
\and
Yoong Kuan Goh\inst{2}}
\authorrunning{Murray Elder and Yoong Kuan Goh} 
%
\tocauthor{Murray Elder and Yoong Kuan Goh}
\institute{University of Technology Sydney, Ultimo NSW 2007, Australia
\email{murray.elder@uts.ed.au}
\and
 University of Technology Sydney, Ultimo NSW 2007, Australia  \email{gohyoongkuan@gmail.com}}

\maketitle

\begin{abstract}
 We prove that the set of permutations sorted by a stack of depth $t \geq 3$ and an infinite stack in series has infinite  basis, by constructing an infinite antichain. This answers an open question on identifying the point at which, in a sorting process with two stacks in series, the basis changes from finite to infinite. \keywords{patterns, string processing algorithms, pattern avoiding permutations, sorting with stacks.}
\end{abstract}

\section{Introduction} 
A permutation is an arrangement of an ordered set of elements.  
Two permutations with same relative ordering are said to be \textit{order isomorphic}, for example,  $132$ and $275$ are order isomorphic  as they have relative ordering $ijk$
where $i<k<j$. 
A subpermutation of a permutation $p_{1} \dots p_{n}$ is a word $p_{i_{1}} \dots p_{i_{s}}$ with $1\leq i_{1}< \dots <i_{s}\leq n$.
A permutation $p$ {\em contains} $q$ if it has  a subpermutation  that is order isomorphic to $q$. For example, $512634$ contains $231$ since the  subpermutation $563$  is order isomorphic to $231$. A permutation that  does not contain $q$ is said to  {\em avoid} $q$. 
Let $S_n$ denote the set of permutations of $\{1,\dots, n\}$ and let $S^\infty=\bigcup_{n\in \N_+} 
S_n$.
 The set of all permutations in $S^\infty$ which avoid every permutation in  $\mathscr B\subseteq S^\infty$ is denoted $Av(\mathscr B)$. 
A set  of permutations is a {\em pattern avoidance class} if it equals $Av(\mathscr B)$ for some $\mathscr B\subseteq S^\infty$.
A set $\mathscr B=\{q_{1},q_{2}, \dots\}\subseteq S^\infty$ is an {\em antichain} if no $q_{i}$ contains $q_{j}$  for any $i\neq j$. 
An antichain $\mathscr B$ is a {\em basis} for a 
 pattern avoidance class $\mathscr C$ if $\mathscr C=Av(\mathscr B)$.

Sorting mechanisms are natural sources of pattern avoidance classes, since (in general) if a permutation cannot be sorted then neither can any permutation containing it.
Knuth characterised the set of permutations that can be sorted by a single pass through an infinite stack as the set of permutations that avoid 231 \cite{Knuth}. Since then many variants of the problem have been studied, 
 for example  \cite{AlbertStacksDeques,AB,AtLTMR1453845,AtkinsonMR1932896,Bona2003,ClaessonMR2601799,Estacks,ELRstacks,AndrewMR3573219,pushall,MR3627423,SmithMR3206158,RebeccaEnumerationPopStacks,TarjanMR0298803,West1993}. 
The set of permutations sortable by a stack of depth 2 and an infinite stack in series has a basis of 20 permutations \cite{Estacks}, while for two infinite stacks  in series  there is no finite basis \cite{MurphyThesis}. For systems of a finite stack of depth $3$ or more and infinite stack in series, it was not known whether the basis was finite or infinite.

Here we show that for depth $3$ or more the basis  is  infinite.
We identify an infinite antichain  belonging to the basis
of the set of permutations sortable by a stack of depth  $3$ and an infinite stack in series. A simple lemma then implies the result for depth $4$ or more.
A computer search by the authors (\cite{GohThesis}) yielded $8194$ basis permutations of lengths up to $13$ 
(see Table~\ref{table:basis3};  basis permutations are listed at  \url{https://github.com/gohyoongkuan/stackSorting-3}). The antichain used to prove our theorem was found by examining this data and looking for patterns that could be arbitrarily extended.

\begin{table}\centering
\caption{Number of basis elements for $S(3,\infty)$ of length up to $13$}\label{table:basis3}
\begin{tabular}{llllllllll}
\hline\noalign{\smallskip}
Permutation length \:  & Number of sortable permutations \: & Number of basis elements\\  \noalign{\smallskip}
\hline
\noalign{\smallskip}
		5  & 120 & 0 \\ 
	
		6  & 711& 9 \\
	
		7 & 4700 & 83 \\
	
		8 &  33039& 169 \\
	
		9 & 239800 &  345 \\

		10 &1769019  &  638 \\

		11 & 13160748& 1069 \\
	
		12 &98371244 & 1980 \\
	
		13 & 737463276 &  3901 \\
		
		 \hline
\end{tabular}
\end{table}

\section{Preliminaries}
The notation $\N$ denotes the non-negative integers $\{0,1,2,\dots\}$ and $\N_+$ the positive integers $\{1,2,\dots\}$.

Let $M_t$ denote the machine consisting of a  stack, $R$, of depth $t\in\N_+$ and infinite stack, $L$, in series as in Fig.~\ref{fig:M3}.  A {\em sorting process} is the process of moving entries of a permutation from right to left from the input to stack $R$, then to stack $L$, then to the output, in some order. Each item must pass through both stacks, and at all times stack $R$ may contain no more than $t$ items (so if at some point stack $R$ holds $t$ items, the next input item cannot enter  until an item is moved from $R$ to $L$).

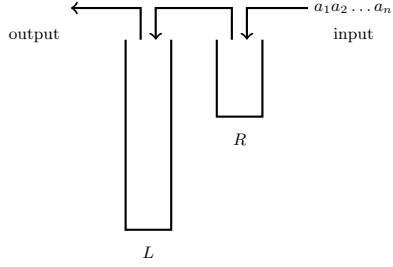
\begin{figure}[ht]
\begin{center}
\begin{tikzpicture}[thick,scale=0.6, every node/.style={scale=0.7}]

\draw (5,1.8) node {input};
\draw (-2,1.8) node {output};

\draw[thick] (0,1.7) -- (0,-2.5) -- (1,-2.5) -- (1,1.7);
\draw (2.5,-.5) node {$R$};
\draw[thick] (2,1.7) -- (2,0) -- (3,0) -- (3,1.7);
\draw (0.5,-3) node {$L$};
\draw[ ->] (4,2.4) -- node[above] {} (2.66,2.4) -- (2.66,1.7);
\draw (5,2.4) node {$a_1a_2\dots a_n$};
\draw[bend left, ->] (2.33,1.7) -- (2.33,2.4) -- node[above] {} (0.66,2.4) -- (0.66,1.7);
\draw[ <-] (-1.2,2.4) -- node[above] {} (0.33,2.4) -- (0.33,1.7);

\end{tikzpicture}
\caption{A stack $R$ of depth $t$ and an infinite stack $L$ in series}
\label{fig:M3}
\end{center}
\end{figure}

A permutation $\alpha = a_1a_2\dots a_n$ is in  $S(t,\infty)$ if it can be sorted to $123\dots n$ using $M_t$. For example, 
$243651\in S(t,\infty)$ for $t\geq 3$ since it can be sorted using the following process: place $2,4$ into stack $R$,  move $4,3,2$ across to stack $L$, place $6,5,1$ into stack $R$, then output $1,2,3,4,5,6$. 
Note $243651\not\in S(2,\infty)$ by   \cite{Estacks}.

The following   lemmas will be used to prove our main result.

\begin{lemma}
	\label{lem:Lrelated} 
	Let $\alpha = a_1a_2\dots a_n\in S(t,\infty)$ for $t\in\N_+$. If $i<j$ and $a_i<a_j$ then in any sorting process that sorts $\alpha$, if both $a_i$ and $a_j$ appear together in stack $L$ then $a_i$ must be above $a_j$.
\end{lemma}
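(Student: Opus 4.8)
The plan is to exploit two structural features of the machine $M_t$: stack $L$ is last-in-first-out, and the \emph{only} way an entry leaves $L$ is by being moved to the output (entries never travel from $L$ back to $R$). Combining this with the fact that the sorting process produces the identity $123\dots n$, the entry of value $a_i$ must be emitted to the output strictly before the entry of value $a_j$ whenever $a_i<a_j$.

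First I would record the relevant invariant. Fix a sorting process and consider any moment at which two entries $x$ and $y$ lie in $L$ with $x$ above $y$. From that moment on, every subsequent push onto $L$ places a new entry above both $x$ and $y$, and every pop removes the current top entry to the output; in particular $y$ cannot be popped while $x$ remains in $L$. Hence, for as long as both stay in $L$, the entry that is nearer the top (here $x$) is necessarily sent to the output before the other one.

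Now I would argue by contradiction. Suppose that at some moment in the process $a_i$ and $a_j$ are both present in $L$ but $a_j$ is above $a_i$. By the invariant just established, $a_j$ is moved to the output before $a_i$. But the output word is $123\dots n$, and since $i<j$ and $a_i<a_j$, the symbol $a_i$ occurs strictly earlier than $a_j$ in $123\dots n$; so $a_i$ must leave $L$ before $a_j$, a contradiction. Therefore, whenever $a_i$ and $a_j$ appear together in $L$, the entry $a_i$ lies above $a_j$.

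I do not expect any genuine obstacle here: the argument is entirely order-theoretic once the machine's definition is unpacked. The one point that must be stated carefully is the structural claim that entries of $M_t$ never pass from $L$ back to $R$ and that the pop-order from $L$ coincides with the output order — both of which are immediate from the description of a sorting process given above — since this is exactly what forces the top-most of two co-resident entries in $L$ to be output first.
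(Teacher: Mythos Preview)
Your proof is correct and is essentially the same argument as the paper's, just written out in full: the paper simply observes that if $a_j$ lies above $a_i$ in stack $L$ then the permutation fails to be sorted, which is exactly the contradiction you derive from the last-in-first-out property of $L$ together with the fact that the output must be $12\dots n$. (Note that the hypothesis $i<j$ is not actually used in your contradiction step---only $a_i<a_j$ is needed there---but this does no harm.)
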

\begin{proof} If $a_j$ is above $a_i$ in stack $L$  then the permutation will fail to be sorted.
\qed\end{proof}

\begin{lemma}
	\label{lem:sbm}
	Let $\alpha = a_1a_2\dots a_n\in S(t,\infty)$ for $t\geq 3$ and suppose $1\leq i<j<k\leq n$ with  $a_i a_j a_k$ order-isomorphic to $132$. Then  in any  sorting process that sorts $\alpha$, $a_i,a_j,a_k$ do not appear together in stack $R$.
\end{lemma}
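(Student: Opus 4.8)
The plan is to argue by contradiction. Suppose some sorting process of $\alpha$ reaches a configuration in which $a_i$, $a_j$ and $a_k$ are simultaneously present in stack $R$. Being order-isomorphic to $132$ means $a_i<a_k<a_j$. Since the entries of $\alpha$ enter $R$ in order of position (that is, $a_1$ first, then $a_2$, and so on) and $i<j<k$, in that configuration $a_i$ lies below $a_j$, which lies below $a_k$ in $R$; as $R$ is a stack, the three are subsequently popped from $R$ --- equivalently, pushed onto $L$ --- in the order $a_k$, then $a_j$, then $a_i$.

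Next I would follow the timeline of the later moves. Consider the moment $a_j$ is pushed onto $L$. Element $a_k$ has already passed from $R$ to $L$ by then, so it is either still on $L$ or has already been output. It cannot still be on $L$: since $a_k$ entered $L$ before $a_j$, the entry $a_j$ would sit above $a_k$, and as $a_k<a_j$ the entry $a_k$ must be output before $a_j$; but $a_j$, being higher in $L$, must leave $L$ (hence be output) first --- a contradiction. Therefore $a_k$ is output before $a_j$ enters $L$. Since $a_i$ enters $L$ only after $a_j$ does, $a_k$ is output before $a_i$ ever reaches $L$. But $a_i$ cannot be output before it has passed through $L$, so $a_i$ is output after $a_k$, contradicting $a_i<a_k$, which forces $a_i$ to be output before $a_k$. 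This contradiction would finish the argument.

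The proof is short; the only care needed is the bookkeeping that each entry undergoes the events \emph{pushed onto} $R$, \emph{moved from} $R$ \emph{to} $L$, \emph{moved from} $L$ \emph{to output} in that order, and that the last of these coincides with being output in its sorted position. The single load-bearing point --- and the only place the shape $132$ (rather than, say, $123$ or $213$) is used --- is that $a_j$, the entry trapped in the middle by position inside $R$, is the \emph{largest} of the three by value; this is precisely what forbids $a_k$ from lingering on $L$ and lets the contradiction propagate down to $a_i$. I expect no real obstacle beyond stating the temporal chain cleanly. Note also that the hypothesis $t\geq 3$ plays no role in the reasoning itself beyond ensuring the statement is not vacuous, since $R$ can hold three entries at once only when its depth is at least $3$.
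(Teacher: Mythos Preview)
Your proof is correct and follows the same line as the paper's, just more carefully. The paper argues in one sentence: once $a_i,a_j,a_k$ sit in $R$ (bottom to top), $a_k$ and then $a_j$ must be pushed onto $L$ before $a_i$ can move, and this forces $a_j$ to sit above $a_k$ in $L$, which (since $a_k<a_j$) is exactly what Lemma~\ref{lem:Lrelated} forbids. Your case split on whether $a_k$ is still on $L$ when $a_j$ arrives makes explicit the one point the paper leaves implicit---namely that $a_k$ cannot already have been output at that moment because $a_i<a_k$ is still trapped in $R$---so your argument is in fact a slightly more complete version of the same proof.
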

\begin{proof}
	If $a_i,a_j,a_k$ appear together in stack $R$, we must move $a_k$ then $a_j$ onto stack $L$ before we can move $a_i$, but this means $a_j,a_k$ violate  Lemma~\ref{lem:Lrelated}. \qed
\end{proof}

\begin{lemma}
	\label{lem:67R} 
	Let $\alpha = a_1a_2\dots a_n\in S(t,\infty)$ for $t\geq 3$ and $1\leq i_1<i_2<\dots <i_6\leq n$ with $a_{i_1}a_{i_2}\dots a_{i_6}$ order isomorphic to $243651$. Then in any  sorting process that sorts $\alpha$,  at some step of the process $a_{i_4}$ and $a_{i_5}$ appear together in stack  $R$.
\end{lemma}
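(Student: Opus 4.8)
The plan is to argue by contradiction. Write $b_m = a_{i_m}$ for $1 \le m \le 6$, so that $b_6 < b_1 < b_3 < b_2 < b_5 < b_4$ and the elements $b_1,\dots,b_6$ are pushed onto stack $R$ in this order of their positions. Suppose some sorting process sorts $\alpha$ but at no step does $R$ contain both $b_4$ and $b_5$. Since $i_4 < i_5$, the element $b_4$ enters $R$ before $b_5$ does; hence the only way they can avoid coexisting in $R$ is for $b_4$ to be moved from $R$ to $L$ at some step, call it $\tau$, strictly before $b_5$ enters $R$.

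Next I would describe the global configuration at step $\tau$. Because $b_5$ has not yet entered $R$ by step $\tau$ and $i_5 < i_6$, the entry $b_6 = a_{i_6}$ has not yet been read from the input. As $b_6$ is smaller than each of $b_1,\dots,b_5$ and the output is produced in increasing order, none of $b_1,\dots,b_5$ has been output by step $\tau$. On the other hand $b_1,b_2,b_3$ occupy positions before $i_4$, so each of them has already been pushed onto $R$ before step $\tau$. Consequently, at step $\tau$ each of $b_1,b_2,b_3$ sits either in $R$ or in $L$.

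Then I would eliminate the possibility that any of $b_1,b_2,b_3$ lies in $L$ at step $\tau$. For $m\in\{1,2,3\}$ we have $i_m < i_4$ and $b_m < b_4$; at step $\tau$ the element $b_4$ is pushed onto $L$, so immediately afterwards $b_4$ lies above everything then in $L$. If $b_m$ were in $L$ at that moment, $b_4$ would be above $b_m$, contradicting Lemma~\ref{lem:Lrelated}. Hence $b_1,b_2,b_3$ are all in $R$ at step $\tau$. But $b_1b_2b_3$ is order-isomorphic to $132$ (since $b_1 < b_3 < b_2$), so by Lemma~\ref{lem:sbm} they cannot be together in $R$ — a contradiction. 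Therefore $b_4$ is still in $R$ when $b_5$ enters $R$, which is the assertion.

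I expect the only delicate point to be the bookkeeping in the first two steps: verifying that "$b_4$ leaves $R$ before $b_5$ enters $R$" really is the sole alternative to the conclusion, and that at step $\tau$ none of $b_1,\dots,b_5$ can yet have been output — this is exactly where the hypotheses that $b_6$ is the smallest of the six and has the largest position are used. Note also that these constraints, and the two lemmas invoked, are insensitive to whatever other entries $\alpha$ may contain, so no separate argument about intervening elements is needed. Once the configuration at step $\tau$ is pinned down, the contradiction is immediate and requires no calculation.
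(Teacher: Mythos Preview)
Your argument is correct and takes a genuinely different route from the paper's. The paper argues forwards by case analysis: at the moment $b_4$ is about to be input it enumerates the four possible distributions of $b_1,b_2,b_3$ between $R$ and $L$, uses Lemmas~\ref{lem:Lrelated} and~\ref{lem:sbm} to rule out three of them, and shows that in the surviving case (all three in $L$) the entries $b_4,b_5$ are forced to wait together in $R$. You instead argue by contradiction at a single, later moment --- the step at which $b_4$ would have to leave $R$ for $L$ --- and observe that Lemma~\ref{lem:Lrelated} then forbids any of $b_1,b_2,b_3$ from being in $L$ while Lemma~\ref{lem:sbm} forbids all three from being in $R$. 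Your version is shorter and avoids the case split; the paper's version has the side benefit of pinning down the actual configuration that must occur (namely $b_1,b_2,b_3$ sitting in $L$ in order), which is consonant with how the later propositions are phrased, though not formally needed for the lemma as stated.
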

\begin{proof}
For simplicity let us write  $a_{i_1}=2, a_{i_2}=4, a_{i_3}=3, a_{i_4}=6, a_{i_5}=5, a_{i_6}=1$. Before $6$ is input, $2,3,4$ are in the two stacks in one of the following configurations:
\begin{enumerate}
	\item $2,4,3$ are all in stack $R$. In this case we  violate Lemma \ref{lem:sbm}.
	\item two items are in stack $R$ and one is in stack $L$.  In this case by Lemma~\ref{lem:Lrelated} we cannot move 
	$6$ to stack $L$, so $6$ must placed and kept in stack $R$. If $t=3$ stack $R$ is now full, so $5$ cannot move into the system, and if $t\geq 4$, when $5$ is input we violate Lemma \ref{lem:sbm}.
	\item one item, say $a$,  is  in stack $R$ and two items are in stack $L$. In this case we cannot move $6,5$ into stack $L$ by Lemma \ref{lem:Lrelated} so they remain in stack $R$ on top of $a$,  violating Lemma \ref{lem:sbm}.
	
	\item stack R is empty.
	In this case, $2, 3,4$ must be placed in stack $L$ in order,  else we violate Lemma~\ref{lem:Lrelated}. 	We cannot place $6,5$ into stack $L$ until it is empty, so they must both stay in  stack $R$ until 4 is output.  
\end{enumerate}
In particular, the last case is the only possibility and in this case $a_{i_4}, a_{i_5}$ appear in stack $R$ together.
\qed\end{proof}

\begin{lemma}
	\label{GZigZag32514}
	Let $\alpha = a_1a_2\dots a_n\in S(t,\infty)$ for $t\geq 3$ and suppose $1\leq i_1<i_2<\dots<i_5\leq n$ with  $a_{i_1}a_{i_2}\dots a_{i_5}$ order-isomorphic to $32514$.  Then, in any sorting process that sorts $\alpha$, if   $a_{i_1},a_{i_2}$ appear together in stack $R$, then at some step in the process $a_{i_3},a_{i_4}$ appear together in stack $L$.
\end{lemma}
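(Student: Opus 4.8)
The plan is to argue by contradiction, in the style of the proof of Lemma~\ref{lem:67R}. For legibility write $a_{i_1}=3$, $a_{i_2}=2$, $a_{i_3}=5$, $a_{i_4}=1$, $a_{i_5}=4$; these items are read into $R$ in the order $3,2,5,1,4$ (with other entries of $\alpha$ possibly interleaved) and must be emitted in the order $1,2,3,4,5$. Fix a sorting process for $\alpha$ in which $3$ and $2$ occur together in $R$ at some step, and suppose for contradiction that $5$ and $1$ never occur together in $L$.

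First I would locate $5$. Since $1<5$, item $1$ is output before $5$, so the interval during which $1$ occupies $L$ ends before the one during which $5$ occupies $L$ does; as these intervals are disjoint by assumption, the former ends before the latter begins, i.e.\ $5$ is not moved into $L$ until after $1$ has been output. Hence $5$ is still in $R$ at the moment $1$ is output, and, since $1$ had entered $L$ earlier while $5$ was already below it in $R$, the item $5$ lies below $1$ in $R$ for the whole time $1$ spends in $R$.

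Next I would pin down the contents of $R$ at the step $\sigma$ where $1$ enters it. From the hypothesis, $3$ is still in $R$ when $2$ enters (otherwise they never meet in $R$), so $2$ lies above $3$, the item $3$ cannot leave $R$ before $2$ does, and — since $2$ is output before $3$ — the item $3$ cannot be moved into $L$ until $2$ has been output. At step $\sigma$ none of $1,2,3$ has been output; if $2$ were still in $R$ then so would be $3$, putting the four distinct items $3,2,5,1$ into $R$ at once, which is impossible for $t=3$. Therefore at $\sigma$ item $2$ has already gone to $L$, item $3$ is still in $R$, and $R$ contains exactly $3,5,1$, in that order from the bottom. (For $t\geq 4$ one argues similarly, with $R$ possibly holding $3,2,5,1$ and the forbidden patterns below playing the role of the depth bound; alternatively one may invoke the depth-reduction lemma used elsewhere in the paper.)

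Finally I would follow $4$. Since $R$ is full at $\sigma$, item $1$ is moved to $L$ before $4$ is read in; but $4$ still cannot enter $R$, because $a_{i_1}a_{i_3}a_{i_5}$ is order-isomorphic to $132$ and so, by Lemma~\ref{lem:sbm}, the items $3,5,4$ cannot be in $R$ together — hence $5$ too must be moved to $L$ before $4$ is read in. The goal is then to show, using the $L$-order forced by Lemma~\ref{lem:Lrelated} ($2$ above $5$, $3$ above $5$, $1$ above $4$, $2$ above $4$) together with the second obstruction $a_{i_2}a_{i_3}a_{i_5}\cong 132$, that every admissible schedule of the moves $1\to L$, the output of $1$, the output of $2$, $5\to L$ and the reading-in of $4$ forces $5$ into $L$ while $1$ is still present, contradicting the assumption. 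I expect this last case analysis — where the depth restriction and the two $132$-obstructions must be combined carefully to rule out every way of delaying $5$ in $R$ — to be the main obstacle; the earlier steps are routine bookkeeping with Lemmas~\ref{lem:Lrelated} and~\ref{lem:sbm}.
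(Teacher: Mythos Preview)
Your argument targets the wrong pair. The lemma as printed contains an index slip: the conclusion should be that $a_{i_3}$ and $a_{i_5}$ (the entries playing $5$ and $4$) meet in $L$, not $a_{i_3}$ and $a_{i_4}$ (the entries playing $5$ and $1$). This is what the paper's figure actually establishes---its terminal captions read ``$4,5$ must appear together in stack $L$''---and it is how the lemma is invoked in Proposition~\ref{prop:Gi}, where from $7\,6\,10\,5\,9\cong 32514$ one concludes that $10$ and $9$ (that is, $a_{i_3}$ and $a_{i_5}$) meet in $L$.

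Read literally the statement is false, which is exactly why the case analysis you flag as ``the main obstacle'' cannot be closed. For $t=3$: push $3$; push $2$ (so $2,3$ are together in $R$); move $2$ to $L$; push $5$; push $1$; move $1$ to $L$; output $1$; output $2$; move $5$ to $L$; move $3$ to $L$; output $3$; push $4$; move $4$ to $L$; output $4,5$. This sorts $32514$ with $2,3$ meeting in $R$, yet $5$ and $1$ are never simultaneously in $L$. Your own outline reaches precisely this branch (after $1,2$ are output, $5$ then $3$ go to $L$ and $4$ enters freely), so no contradiction is available.

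For the intended conclusion the paper argues by a short forward case tree on the moves after $3,2$ sit together in $R$, rather than by contradiction. The essential mechanism is that $a_{i_1}a_{i_3}a_{i_5}\cong 132$ (and likewise $a_{i_2}a_{i_3}a_{i_5}$), so by Lemma~\ref{lem:sbm} the entry $5$ must vacate $R$ before $4$ can be pushed; since $3$ lies below $5$ in $R$ and cannot leave until $2$ has been output, this forces $5$ into $L$ first, and then $4$ (being smaller) must enter $L$ while $5$ is still there. Your interval bookkeeping would adapt cleanly to this corrected target, tracking $4$ in place of $1$. A secondary gap: your parenthetical ``for $t\geq 4$ one argues similarly'' does not stand on its own, because for larger $t$ the depth bound no longer expels $2$ from $R$ at step $\sigma$; it is the $132$-obstructions of Lemma~\ref{lem:sbm}, not the depth, that drive the argument for general $t\geq 3$.
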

\begin{proof}
For simplicity let us write  $a_{i_1}=3$, $a_{i_2}=2$, $a_{i_3}=5$, $a_{i_4}=1$, $a_{i_5}=4$. 
Figure~\ref{fig:lemma4} indicates the possible ways to sort these entries, and in the case that $2,3$ appear together in stack $R$ we see that $4,5$ must appear in stack $L$ together at some later point.
\qed\end{proof}
\begin{figure}[ht]
	\begin{center}
		
		\begin{tikzpicture}[scale=2, node distance = 2cm, auto]
		\begin{node} (root)
		{
			\begin{tikzpicture}[thick,scale=0.4, every node/.style={scale=0.6}]
			\draw[thick] (0,1.7) -- (0,-2.5) -- (1,-2.5) -- (1,1.7);
			\draw (2.5,-.5) node {$R$};
			\draw[thick] (2,1.7) -- (2,0) -- (3,0) -- (3,1.7);
			\draw (0.5,-3) node {$L$};
			\draw[ ->] (4,2.4) -- node[above] {} (2.66,2.4) -- (2.66,1.7);
			\draw (5.0,2.4) node {$2514$};
			\draw[bend left, ->] (2.33,1.7) -- (2.33,2.4) -- node[above] {} (0.66,2.4) -- (0.66,1.7);
			\draw[ <-] (-1.2,2.4) -- node[above] {} (0.33,2.4) -- (0.33,1.7);	
			\draw (2.5,0.3) node {$3$};
			\end{tikzpicture}
		};
	\end{node}
	
	\node[right of=root, node distance=4.0cm] (root_2)
	{
		\begin{tikzpicture}[thick,scale=0.4, every node/.style={scale=0.6}]
		\draw[thick] (0,1.7) -- (0,-2.5) -- (1,-2.5) -- (1,1.7);
		\draw (2.5,-.5) node {$R$};
		\draw[thick] (2,1.7) -- (2,0) -- (3,0) -- (3,1.7);
		\draw (0.5,-3) node {$L$};
		\draw[ ->] (4,2.4) -- node[above] {} (2.66,2.4) -- (2.66,1.7);
		\draw (5.0,2.4) node {$2514$};
		\draw[bend left, ->] (2.33,1.7) -- (2.33,2.4) -- node[above] {} (0.66,2.4) -- (0.66,1.7);
		\draw[ <-] (-1.2,2.4) -- node[above] {} (0.33,2.4) -- (0.33,1.7);	
		\draw (0.5,-1.7) node {$3$};
		\draw (2.5,-3.7) node {$2, 3$ never appear together in stack $R$};
		\end{tikzpicture}
	};
	
	\node[below of=root, node distance=3.5cm] (root_3)
	{
		
		\begin{tikzpicture}[thick,scale=0.4, every node/.style={scale=0.6}]
		\draw[thick] (0,1.7) -- (0,-2.5) -- (1,-2.5) -- (1,1.7);
		\draw (2.5,-.5) node {$R$};
		\draw[thick] (2,1.7) -- (2,0) -- (3,0) -- (3,1.7);
		\draw (0.5,-3) node {$L$};
		\draw[ ->] (4,2.4) -- node[above] {} (2.66,2.4) -- (2.66,1.7);
		\draw (4.5,2.4) node {$514$};
		\draw[bend left, ->] (2.33,1.7) -- (2.33,2.4) -- node[above] {} (0.66,2.4) -- (0.66,1.7);
		\draw[ <-] (-1.2,2.4) -- node[above] {} (0.33,2.4) -- (0.33,1.7);	
		\draw (2.5,0.8) node {$2$};
		\draw (2.5,0.3) node {$3$};
	
		\end{tikzpicture}
		
	};

	\node[below of=root_3, node distance=3.3cm] (root_5)
	{
		
		\begin{tikzpicture}[thick,scale=0.4, every node/.style={scale=0.6}]
		\draw[thick] (0,1.7) -- (0,-2.5) -- (1,-2.5) -- (1,1.7);
		\draw (2.5,-.5) node {$R$};
		\draw[thick] (2,1.7) -- (2,0) -- (3,0) -- (3,1.7);
		\draw (0.5,-3) node {$L$};
		\draw[ ->] (4,2.4) -- node[above] {} (2.66,2.4) -- (2.66,1.7);
		\draw (4.5,2.4) node {$514$};
		\draw[bend left, ->] (2.33,1.7) -- (2.33,2.4) -- node[above] {} (0.66,2.4) -- (0.66,1.7);
		\draw[ <-] (-1.2,2.4) -- node[above] {} (0.33,2.4) -- (0.33,1.7);	
		\draw (0.5,-1.7) node {$2$};
		\draw (2.5,0.3) node {$3$};
		\end{tikzpicture}
		
	};
	
	\node[right of=root_5, node distance=3.3cm] (root_7)
	{
		
		\begin{tikzpicture}[thick,scale=0.4, every node/.style={scale=0.6}]
		\draw[thick] (0,1.7) -- (0,-2.5) -- (1,-2.5) -- (1,1.7);
		\draw (2.5,-.5) node {$R$};
		\draw[thick] (2,1.7) -- (2,0) -- (3,0) -- (3,1.7);
		\draw (0.5,-3) node {$L$};
		\draw[ ->] (4,2.4) -- node[above] {} (2.66,2.4) -- (2.66,1.7);
		\draw (4.5,2.4) node {$14$};
		\draw[bend left, ->] (2.33,1.7) -- (2.33,2.4) -- node[above] {} (0.66,2.4) -- (0.66,1.7);
		\draw[ <-] (-1.2,2.4) -- node[above] {} (0.33,2.4) -- (0.33,1.7);	
		\draw (0.5,-1.7) node {$2$};
		\draw (2.5,0.8) node {$5$};
		\draw (2.5,0.3) node {$3$};
		\end{tikzpicture}
		
	};
	\node[right of=root_7, node distance=3.7cm] (root_8)
	{
		
		\begin{tikzpicture}[thick,scale=0.4, every node/.style={scale=0.6}]
		\draw[thick] (0,1.7) -- (0,-2.5) -- (1,-2.5) -- (1,1.7);
		\draw (2.5,-.5) node {$R$};
		\draw[thick] (2,1.7) -- (2,0) -- (3,0) -- (3,1.7);
		\draw (0.5,-3) node {$L$};
		\draw[ ->] (4,2.4) -- node[above] {} (2.66,2.4) -- (2.66,1.7);
		\draw (4.5,2.4) node {$4$};
		\draw (-2.0,2.4) node {$12$};
		\draw[bend left, ->] (2.33,1.7) -- (2.33,2.4) -- node[above] {} (0.66,2.4) -- (0.66,1.7);
		\draw[ <-] (-1.2,2.4) -- node[above] {} (0.33,2.4) -- (0.33,1.7);	
		\draw (0.5,-1.7) node {$5$};
		\draw (2.5,0.3) node {$3$};
		\draw (2.0,-4.0) node {4, 5 must appear together in stack $L$};
		\end{tikzpicture}
		
	};
	
	\node[right of=root_3, node distance=3.3cm] (root_6)
	{
		
		\begin{tikzpicture}[thick,scale=0.4, every node/.style={scale=0.6}]
		\draw[thick] (0,1.7) -- (0,-2.5) -- (1,-2.5) -- (1,1.7);
		\draw (2.5,-.5) node {$R$};
		\draw[thick] (2,1.7) -- (2,0) -- (3,0) -- (3,1.7);
		\draw (0.5,-3) node {$L$};
		\draw[ ->] (4,2.4) -- node[above] {} (2.66,2.4) -- (2.66,1.7);
		\draw (4.5,2.4) node {$14$};
		\draw[bend left, ->] (2.33,1.7) -- (2.33,2.4) -- node[above] {} (0.66,2.4) -- (0.66,1.7);
		\draw[ <-] (-1.2,2.4) -- node[above] {} (0.33,2.4) -- (0.33,1.7);	
		\draw (2.5,1.3) node {$5$};
		\draw (2.5,0.8) node {$2$};
		\draw (2.5,0.3) node {$3$};
		\end{tikzpicture}
		
	};
	\node[right of=root_6,node distance=3.7cm] (root_10)
	{
		
		\begin{tikzpicture}[thick,scale=0.4, every node/.style={scale=0.6}]
		\draw[thick] (0,1.7) -- (0,-2.5) -- (1,-2.5) -- (1,1.7);
		\draw (2.5,-.5) node {$R$};
		\draw[thick] (2,1.7) -- (2,0) -- (3,0) -- (3,1.7);
		\draw (0.5,-3) node {$L$};
		\draw[ ->] (4,2.4) -- node[above] {} (2.66,2.4) -- (2.66,1.7);
		\draw (4.5,2.4) node {$14$};
		\draw[bend left, ->] (2.33,1.7) -- (2.33,2.4) -- node[above] {} (0.66,2.4) -- (0.66,1.7);
		\draw[ <-] (-1.2,2.4) -- node[above] {} (0.33,2.4) -- (0.33,1.7);	
		\draw (0.5,-1.7) node {$5$};
		\draw (2.5,0.8) node {$2$};
		\draw (2.5,0.3) node {$3$};
		\draw (2.0,-4.0) node {4,5  must appear together in stack $L$};
		\end{tikzpicture}
		
	};
	
	\path [line] (root) -- (root_2);
	\path [line] (root) -- (root_3);
	\path [line] (root_3) -- (root_5);
	\path [line] (root_3) -- (root_6);
	\path [line] (root_5) -- (root_7);
	\path [line] (root_7) -- (root_8);
	\path [line] (root_6) -- (root_10);
\end{tikzpicture}
\end{center}
\caption{Sorting $32514$}
\label{fig:lemma4}
\end{figure}
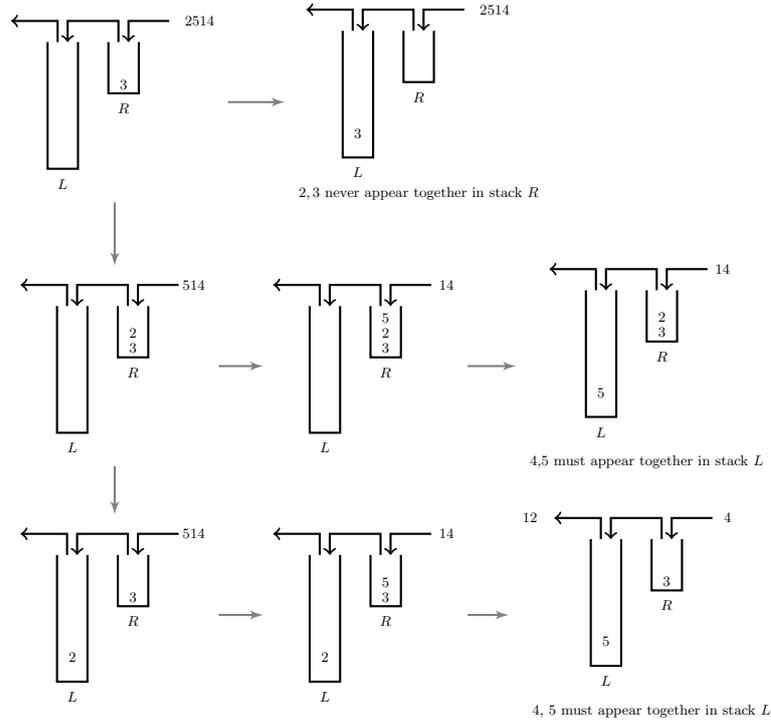

\begin{lemma}
	\label{GZigZag32541}
	Let $\alpha = a_1a_2\dots a_n\in S(t,\infty)$ for $t\geq 3$ and suppose $1\leq i_1<i_2<\dots<i_5\leq n$ with  $a_{i_1}a_{i_2}\dots a_{i_5}$ order-isomorphic to $32541$.  Then, in any sorting process that sorts $\alpha$, if 
	   $a_{i_1},a_{i_2}$ appear together in stack $L$, then  at the step that $a_{i_1}$ is output, \begin{enumerate}\item $a_{i_3},a_{i_4}$ are both in stack  $R$, and \item   if $a_k$ is in stack $L$ then $k<i_2$.\end{enumerate}
\end{lemma}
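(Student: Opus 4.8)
The plan is to follow a proof that tracks, throughout a fixed successful sorting process satisfying the hypothesis, the unique moment at which each relevant entry passes from stack $R$ into stack $L$, and to combine two basic facts: items enter $R$ in the input order $a_1,a_2,\dots,a_n$, and the output must be $12\dots n$. As in the earlier lemmas I would abbreviate the five entries by their pattern values, writing $a_{i_1}=3$, $a_{i_2}=2$, $a_{i_3}=5$, $a_{i_4}=4$, $a_{i_5}=1$, so that the actual values satisfy $a_{i_5}<a_{i_2}<a_{i_1}<a_{i_4}<a_{i_3}$ while $i_1<i_2<i_3<i_4<i_5$. Since $\alpha$ is sorted, each entry is input once, passes once through $R$, once through $L$, and is output once, in that order, so "enters $L$" is a well-defined single event for each entry.

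The key step is to show that if $a_{i_1}$ and $a_{i_2}$ are simultaneously in $L$ at some moment, then in fact $a_{i_1}$ enters $L$ \emph{before} $a_{i_2}$ enters $R$. First, whenever $3$ and $2$ are both in $L$, entry $2$ must lie above entry $3$: otherwise $3$ is on top and would be forced out of $L$ before $2$, contradicting that the output is $12\dots n$; hence $2$ enters $L$ after $3$ does. Now suppose, for contradiction, that $2$ enters $R$ before $3$ enters $L$. Since $i_1<i_2$, entry $3$ enters $R$ before entry $2$, so at the instant $2$ enters $R$ the entry $3$ is still in $R$ (it has not yet moved to $L$); thus $2$ sits above $3$ in $R$ and must be moved to $L$ before $3$ can be, forcing $2$ into $L$ before $3$ — contrary to the previous sentence. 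Hence $a_{i_1}$ enters $L$ before $a_{i_2}$ enters $R$. This is the crux; the two conclusions of the lemma are then essentially bookkeeping built on it.

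For part (2), I would look at the configuration when $a_{i_1}$ is output: it is then the top entry of $L$. If some $a_k$ with $k\neq i_1$ lies in $L$ at that moment, it lies below $a_{i_1}$, hence entered $L$ before $a_{i_1}$ did. But if $k\ge i_2$ then $a_k$ enters $R$ no earlier than $a_{i_2}$ does, hence (by the key step) strictly after $a_{i_1}$ entered $L$, hence $a_k$ enters $L$ after $a_{i_1}$ — a contradiction. So $k<i_2$, giving part (2). For part (1): the entries $a_{i_3},a_{i_4}$ have values larger than $a_{i_1}$, so they have not been output when $a_{i_1}$ is; by part (2) they are not in $L$ at that moment either, since $i_3,i_4>i_2$. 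They have, however, already been input, because $a_{i_5}$ has value smaller than $a_{i_1}$ and so is output — hence input — before $a_{i_1}$ is output, and $i_3,i_4<i_5$ forces $a_{i_3},a_{i_4}$ to be input even earlier. Being input but neither in $L$ nor output, they must both be in $R$, which is part (1).

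I expect the main obstacle to be exactly the key step, namely pinning down the relative timing of the events "$a_{i_1}$ enters $L$" and "$a_{i_2}$ enters $R$"; once that ordering is secured, parts (1) and (2) are routine. A secondary point to handle with care is the precise meaning of "at the step that $a_{i_1}$ is output"; I would read it as the configuration immediately before (equivalently, after) that output move, and note that the stated conclusion holds under either reading, since $a_{i_1}$ itself has index $i_1<i_2$.
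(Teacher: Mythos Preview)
Your argument is correct and rests on the same core observation as the paper's proof: under the hypothesis, $a_{i_1}$ must enter $L$ before $a_{i_2}$ enters $R$ (equivalently, $a_{i_1},a_{i_2}$ never sit together in $R$), from which both conclusions follow. The paper establishes this via a diagrammatic case enumeration and then invokes Lemma~\ref{lem:Lrelated} directly to get part~(1), whereas you prove the key timing fact by a clean contradiction on stack order and then derive part~(1) from part~(2) together with the observation that $a_{i_5}$ has already been output. The differences are presentational rather than substantive; your route avoids the figure and is arguably tidier, while the paper's route makes the dependence on Lemma~\ref{lem:Lrelated} more explicit.
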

\begin{proof}
For simplicity let us write  $a_{i_1}=3$, $a_{i_2}=2$, $a_{i_3}=5$, $a_{i_4}=4$, $a_{i_5}=1$, and $\alpha=u_03u_12u_25u_34u_41u_5$.  
Figure~\ref{fig:lemma5} indicates the possible ways to sort these entries. In the case that $2,3$ appear in stack $R$ together, Lemma~\ref{lem:Lrelated} ensures $2,3$ do not appear together in stack $L$. In the other case, before $3$ is moved into stack $L$, any tokens in stack $L$ come from $u_0u_1$.  Thus when $3$ is output the only tokens in stack $L$ will be $a_k$ with $k<i_2$.
Lemma~\ref{lem:Lrelated} ensures that $4,5$ are not placed on top of 3 in stack $L$, so that the step that $3$ is output they sit together in stack $R$.
\qed\end{proof}

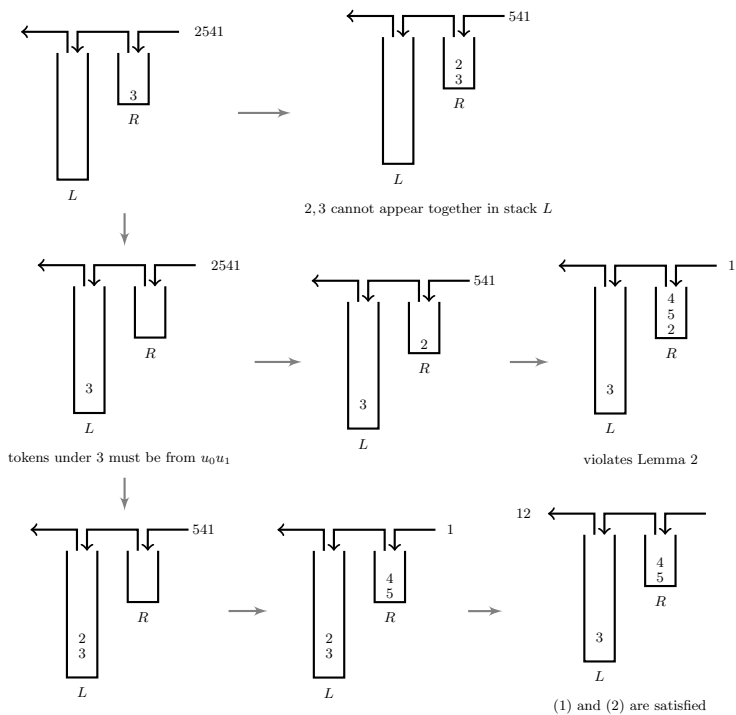
\begin{figure}[ht]
	\begin{center}
		
		\begin{tikzpicture}[scale=2.5, node distance = 2cm, auto]
		\begin{node} (root)
		{
			\begin{tikzpicture}[thick,scale=0.4, every node/.style={scale=0.6}]
			\draw[thick] (0,1.7) -- (0,-2.5) -- (1,-2.5) -- (1,1.7);
			\draw (2.5,-.5) node {$R$};
			\draw[thick] (2,1.7) -- (2,0) -- (3,0) -- (3,1.7);
			\draw (0.5,-3) node {$L$};
			\draw[ ->] (4,2.4) -- node[above] {} (2.66,2.4) -- (2.66,1.7);
			\draw (5.0,2.4) node {$2541$};
			\draw[bend left, ->] (2.33,1.7) -- (2.33,2.4) -- node[above] {} (0.66,2.4) -- (0.66,1.7);
			\draw[ <-] (-1.2,2.4) -- node[above] {} (0.33,2.4) -- (0.33,1.7);	
			\draw (2.5,0.3) node {$3$};
			\end{tikzpicture}
		};
	\end{node}
	
	\node[right of=root, node distance=4.0cm] (root_2)
	{
		\begin{tikzpicture}[thick,scale=0.4, every node/.style={scale=0.6}]
		\draw[thick] (0,1.7) -- (0,-2.5) -- (1,-2.5) -- (1,1.7);
		\draw (2.5,-.5) node {$R$};
		\draw[thick] (2,1.7) -- (2,0) -- (3,0) -- (3,1.7);
		\draw (0.5,-3) node {$L$};
		\draw[ ->] (4,2.4) -- node[above] {} (2.66,2.4) -- (2.66,1.7);
		\draw (4.5,2.4) node {$541$};
		\draw[bend left, ->] (2.33,1.7) -- (2.33,2.4) -- node[above] {} (0.66,2.4) -- (0.66,1.7);
		\draw[ <-] (-1.2,2.4) -- node[above] {} (0.33,2.4) -- (0.33,1.7);	
		\draw (2.5,0.8) node {$2$};
		\draw (2.5,0.3) node {$3$};
		\draw (1.5,-4.0) node {$2,3$ cannot appear together in stack $L$};
		\end{tikzpicture}
	};

	\node[below of=root, node distance=3.3cm] (root_3)
	{
		
		\begin{tikzpicture}[thick,scale=0.4, every node/.style={scale=0.6}]
		\draw[thick] (0,1.7) -- (0,-2.5) -- (1,-2.5) -- (1,1.7);
		\draw (2.5,-.5) node {$R$};
		\draw[thick] (2,1.7) -- (2,0) -- (3,0) -- (3,1.7);
		\draw (0.5,-3) node {$L$};
		\draw[ ->] (4,2.4) -- node[above] {} (2.66,2.4) -- (2.66,1.7);
		\draw (5.0,2.4) node {$2541$};
		\draw[bend left, ->] (2.33,1.7) -- (2.33,2.4) -- node[above] {} (0.66,2.4) -- (0.66,1.7);
		\draw[ <-] (-1.2,2.4) -- node[above] {} (0.33,2.4) -- (0.33,1.7);	
		\draw (0.5,-1.7) node {$3$};
		\draw (1.5,-4.0) node {tokens under $3$ must be from $u_0u_1$};
		\end{tikzpicture}
		
	};
	\node[right of=root_3, node distance=3.7cm] (root_6)
	{
		
		\begin{tikzpicture}[thick,scale=0.4, every node/.style={scale=0.6}]
		\draw[thick] (0,1.7) -- (0,-2.5) -- (1,-2.5) -- (1,1.7);
		\draw (2.5,-.5) node {$R$};
		\draw[thick] (2,1.7) -- (2,0) -- (3,0) -- (3,1.7);
		\draw (0.5,-3) node {$L$};
		\draw[ ->] (4,2.4) -- node[above] {} (2.66,2.4) -- (2.66,1.7);
		\draw (4.5,2.4) node {$541$};
		\draw[bend left, ->] (2.33,1.7) -- (2.33,2.4) -- node[above] {} (0.66,2.4) -- (0.66,1.7);
		\draw[ <-] (-1.2,2.4) -- node[above] {} (0.33,2.4) -- (0.33,1.7);	
		\draw (2.5,0.3) node {$2$};
		\draw (0.5,-1.7) node {$3$};
		\end{tikzpicture}
		
	};
	\node[right of=root_6,node distance=3.2cm] (root_10)
	{
		
		\begin{tikzpicture}[thick,scale=0.4, every node/.style={scale=0.6}]
		\draw[thick] (0,1.7) -- (0,-2.5) -- (1,-2.5) -- (1,1.7);
		\draw (2.5,-.5) node {$R$};
		\draw[thick] (2,1.7) -- (2,0) -- (3,0) -- (3,1.7);
		\draw (0.5,-3) node {$L$};
		\draw[ ->] (4,2.4) -- node[above] {} (2.66,2.4) -- (2.66,1.7);
		\draw (4.5,2.4) node {$1$};
		\draw[bend left, ->] (2.33,1.7) -- (2.33,2.4) -- node[above] {} (0.66,2.4) -- (0.66,1.7);
		\draw[ <-] (-1.2,2.4) -- node[above] {} (0.33,2.4) -- (0.33,1.7);	
		\draw (0.5,-1.7) node {$3$};
		\draw (2.5,1.3) node {$4$};
		\draw (2.5,0.8) node {$5$};
		\draw (2.5,0.3) node {$2$};
		\draw (1.5,-4.0) node {violates Lemma~\ref{lem:sbm}};
		\end{tikzpicture}
		
	};
	
	\node[below of=root_3, node distance=3.3cm] (root_5)
	{
		
		\begin{tikzpicture}[thick,scale=0.4, every node/.style={scale=0.6}]
		\draw[thick] (0,1.7) -- (0,-2.5) -- (1,-2.5) -- (1,1.7);
		\draw (2.5,-.5) node {$R$};
		\draw[thick] (2,1.7) -- (2,0) -- (3,0) -- (3,1.7);
		\draw (0.5,-3) node {$L$};
		\draw[ ->] (4,2.4) -- node[above] {} (2.66,2.4) -- (2.66,1.7);
		\draw (4.5,2.4) node {$541$};
		\draw[bend left, ->] (2.33,1.7) -- (2.33,2.4) -- node[above] {} (0.66,2.4) -- (0.66,1.7);
		\draw[ <-] (-1.2,2.4) -- node[above] {} (0.33,2.4) -- (0.33,1.7);	
		\draw (0.5,-1.2) node {$2$};
		\draw (0.5,-1.7) node {$3$};
		\end{tikzpicture}
		
	};
	
	\node[right of=root_5, node distance=3.2cm] (root_7)
	{
		
		\begin{tikzpicture}[thick,scale=0.4, every node/.style={scale=0.6}]
		\draw[thick] (0,1.7) -- (0,-2.5) -- (1,-2.5) -- (1,1.7);
		\draw (2.5,-.5) node {$R$};
		\draw[thick] (2,1.7) -- (2,0) -- (3,0) -- (3,1.7);
		\draw (0.5,-3) node {$L$};
		\draw[ ->] (4,2.4) -- node[above] {} (2.66,2.4) -- (2.66,1.7);
		\draw (4.5,2.4) node {$1$};
		\draw[bend left, ->] (2.33,1.7) -- (2.33,2.4) -- node[above] {} (0.66,2.4) -- (0.66,1.7);
		\draw[ <-] (-1.2,2.4) -- node[above] {} (0.33,2.4) -- (0.33,1.7);	
		\draw (0.5,-1.2) node {$2$};
		\draw (0.5,-1.7) node {$3$};
		\draw (2.5,0.8) node {$4$};
		\draw (2.5,0.3) node {$5$};
		\end{tikzpicture}
		
	};
	\node[right of=root_7, node distance=3.3cm] (root_8)
	{
		
		\begin{tikzpicture}[thick,scale=0.4, every node/.style={scale=0.6}]
		\draw[thick] (0,1.7) -- (0,-2.5) -- (1,-2.5) -- (1,1.7);
		\draw (2.5,-.5) node {$R$};
		\draw[thick] (2,1.7) -- (2,0) -- (3,0) -- (3,1.7);
		\draw (0.5,-3) node {$L$};
		\draw[ ->] (4,2.4) -- node[above] {} (2.66,2.4) -- (2.66,1.7);
		\draw (4.5,2.4) node {};
		\draw (-2.0,2.4) node {$12$};
		\draw[bend left, ->] (2.33,1.7) -- (2.33,2.4) -- node[above] {} (0.66,2.4) -- (0.66,1.7);
		\draw[ <-] (-1.2,2.4) -- node[above] {} (0.33,2.4) -- (0.33,1.7);	
		\draw (0.5,-1.7) node {$3$};
		\draw (2.5,0.8) node {$4$};
		\draw (2.5,0.3) node {$5$};
		\draw (1.5,-4.0) node {(1) and (2) are satisfied};
		\end{tikzpicture}
		
	};
	
	\path [line] (root) -- (root_2);
	\path [line] (root) -- (root_3);
	\path [line] (root_3) -- (root_5);
	\path [line] (root_3) -- (root_6);
	\path [line] (root_5) -- (root_7);
	\path [line] (root_7) -- (root_8);
	\path [line] (root_6) -- (root_10);

\end{tikzpicture}
\end{center}
\caption{Sorting $32541$}
\label{fig:lemma5}
\end{figure}

\section{An infinite antichain}
We use the following notation. If $\alpha=a_1\dots a_n$ is a permutation of $12\dots n$ and $m\in\mathbb Z$ then let $\alpha_m$
be the permutation obtained by adding $m$ to each entry of $\alpha$. For example $(1\ 2\ 3)_4=5\ 6\ 7$ and $13_6=19$.

We construct a family of permutations $\mathscr G=\{G_{i}\mid i\in\N\}$ as follows. Define 
\[\begin{array}{rcl}P &=& 2 \ 4  \ 3 \ 7 \ 6 \ 1 \\
x_j &=& (10 \ 5\  9)_{6j}  \\
y_j &=& (13 \ 12\  8)_{6j}\\
S_i &=&  (14 \ 15 \ 11)_{6i} \\
G_i &=& P \ x_0 \ y_ 0 \ x_1\  y_1 \ \dots \ x_i \ y_i \ S_i \end{array}\]
The first three terms are 
\[\begin{array}{l}G_0= 2  \  4 \   3 \   7 \   6 \   1  \   {(10\  5\ 9)\  (13\  12\  8)}\   14  \  15  \  11,  \\ 
G_1= 2  \  4 \   3 \   7 \   6 \   1  \   {(10\  5\   9)\   (13\  12\  8)\  (16\   11\  15)\  (19\  18\  14)} \  20 \  21 \  17,  \\ 
G_2= 
P\  {(10\  5\  9)\  (13\  12\  8)\  (16\  11\  15) \  (19 \  18\  14)\  (22\  17\  21)} 
(25\  24\  20) \ 26 \  27 \  23.\end{array}\] 
A diagram of $G_2$ is shown in Figure~\ref{fig:diag} which shows the general pattern.

  \begin{figure}[ht]

\begin{center}
      \begin{tikzpicture}[scale = .4, auto = center, inner sep=.3mm]

   \draw[color=gray] (0.5,0.5) -- (0.5,7.5) -- (6.5,7.5) -- (6.5,0.5) -- (0.5,0.5);

     \draw[color=gray] (6.5,4.5) -- (9.5,4.5) -- (9.5,10.5) -- (6.5,10.5) -- (6.5,4.5);
     \draw[color=gray] (9.5,7.5) -- (12.5,7.5) -- (12.5,13.5) -- (9.5,13.5) -- (9.5,7.5);
     
     \draw[color=gray] (9.5,7.5) -- (12.5,7.5) -- (12.5,13.5) -- (9.5,13.5) -- (9.5,7.5);
     \draw[color=gray] (12.5,10.5) -- (15.5,10.5) -- (15.5,16.5) -- (12.5,16.5) -- (12.5,10.5);

     \draw[color=gray] (12.5,10.5) -- (15.5,10.5) -- (15.5,16.5) -- (12.5,16.5) -- (12.5,10.5);
     \draw[color=gray] (15.5,13.5) -- (18.5,13.5) -- (18.5,19.5) -- (15.5,19.5) -- (15.5,13.5);
 
     \draw[color=gray] (15.5,13.5) -- (18.5,13.5) -- (18.5,19.5) -- (15.5,19.5) -- (15.5,13.5);
     \draw[color=gray] (18.5,16.5) -- (21.5,16.5) -- (21.5,22.5) -- (18.5,22.5) -- (18.5,16.5);
            
     \draw[color=gray] (18.5,16.5) -- (21.5,16.5) -- (21.5,22.5) -- (18.5,22.5) -- (18.5,16.5);
     \draw[color=gray] (21.5,19.5) -- (24.5,19.5) -- (24.5,25.5) -- (21.5,25.5) -- (21.5,19.5);

     \draw[color=gray] (24.5,22.5) -- (27.5,22.5) -- (27.5,27.5) -- (24.5,27.5) -- (24.5,22.5);
                
                   \node[circle, fill=black] at (1,2) {};
        \node[circle, fill=black] at (2,4) {};
        \node[circle, fill=black] at (3,3) {};
        \node[circle, fill=black] at (4,7) {};
        \node[circle, fill=black] at (5,6) {};
        \node[circle, fill=black] at (6,1) {};
        
        \node[circle, fill=black] at (7,10) {};
          \node[circle, fill=black] at (8,5) {};
            \node[circle, fill=black] at (9,9) {};
            
                    \node[circle, fill=black] at (10,13) {};               
                       \node[circle, fill=black] at (11,12) {};
        \node[circle, fill=black] at (12,8) {};
        
        \node[circle, fill=black] at (13,16) {};
        \node[circle, fill=black] at (14,11) {};
        \node[circle, fill=black] at (15,15) {};
        
                \node[circle, fill=black] at (16,19) {};
        \node[circle, fill=black] at (17,18) {};
          \node[circle, fill=black] at (18,14) {};

            \node[circle, fill=black] at (19,22) {};
                    \node[circle, fill=black] at (20,17) {};       
           \node[circle, fill=black] at (21,21) {};
           
                \node[circle, fill=black] at (22,25) {};
        \node[circle, fill=black] at (23,24) {};
        \node[circle, fill=black] at (24,20) {};
        
        \node[circle, fill=black] at (25,26) {};
        \node[circle, fill=black] at (26,27) {};
        \node[circle, fill=black] at (27,23) {};


      \end{tikzpicture}
      \end{center}

\caption{Diagram of the permutation $G_2= 
2 \  4  \  3  \  7 \  6 \  1 \   
x_0\ y_0\ x_1\ y_1\ x_2 \ y_2 \ 26 \  27 \  23$ }
\label{fig:diag}
  \end{figure}
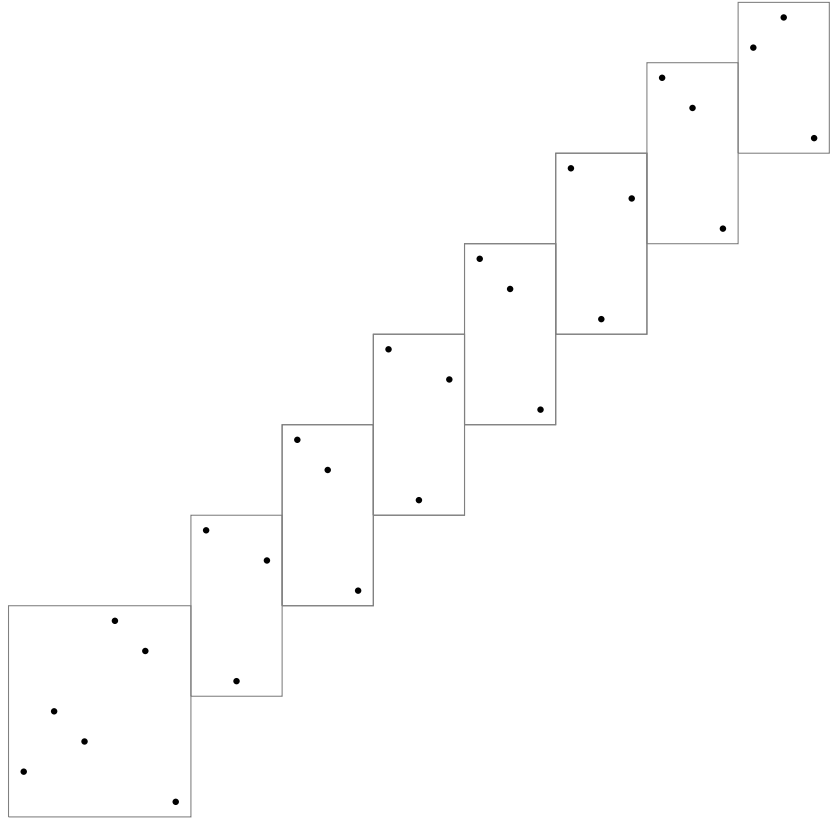

We will prove that each $G_i$  is an element of the basis of $S(3,\infty)$ for all $i\in \N$.
Note that if we define $x_{-1}, y_{-1}$ to be empty,
 $G_{-1}=2    4    3    7    6    1     8    9    5$ is also an element of the basis. We noticed this and $G_0$ had a particular pattern which we could extend using $x_jy_j$. However,  we exclude $G_{-1}$ from our antichain to make the proofs simpler. 
 
\begin{proposition}
	\label{prop:Gi} The permutation
	$G_i\not\in S(3,\infty)$  for all $i\in \mathbb N$.
\end{proposition}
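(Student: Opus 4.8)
The plan is to argue by contradiction: suppose some process sorts $G_i$, and fix it. The mechanism behind the family $\mathscr G$ is that the prefix $P$ and each pair of consecutive blocks $x_jy_j$ realise the pattern $32541$ (together with a companion occurrence of $32514$), so that Lemmas~\ref{GZigZag32514} and~\ref{GZigZag32541} can be applied again and again along a staircase of sub-patterns, each time forcing two entries of the next block to occupy stack $R$ at the moment a prescribed entry is output; the staircase is defeated only at the last block, because the first two entries of $S_i$ are in ascending order. To start it, note $a_1\dots a_6 = 2\,4\,3\,7\,6\,1$ is order-isomorphic to $243651$, so Lemma~\ref{lem:67R} applies, and its proof shows that $a_1a_2a_3 = 2\,4\,3$ are moved onto $L$ (so $4$ and $3$ lie together in $L$) while $a_4=7$ and $a_5=6$ remain in $R$ until $4$ is output. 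Since $a_2a_3a_4a_5a_6 = 4\,3\,7\,6\,1$ is order-isomorphic to $32541$, Lemma~\ref{GZigZag32541}, applied with $4,3$ together in $L$, then gives that just after $4$ is output $L$ is empty, the output is $1\,2\,3\,4$, and $\{7,6\}\subseteq R$. I claim, and would prove by induction on $j$ for $-1\le j\le i$, the statement $\mathcal C_j$: \emph{at some step the two entries of $G_i$ at positions $10+6j$ and $11+6j$ (of values $13+6j$ and $12+6j$; for $j=-1$ they are $a_4=7$ and $a_5=6$) lie together in $R$, $L$ is empty, and the output so far is exactly $1\,2\,\dots\,(10+6j)$}. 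The work above is precisely $\mathcal C_{-1}$.

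For the inductive step I would assume $\mathcal C_j$ for some $j<i$. The entries at positions $10+6j,11+6j,13+6j,14+6j,15+6j$ are $13+6j,\,12+6j$ followed by the three entries $16+6j,\,11+6j,\,15+6j$ of $x_{j+1}$, a quintuple order-isomorphic to $32514$; so, since $13+6j$ and $12+6j$ lie together in $R$, Lemma~\ref{GZigZag32514} puts $16+6j$ and $11+6j$ together in $L$ at some step. Because $16+6j$ stays in $L$ from the moment it arrives there until it is output, and $11+6j<15+6j<16+6j$ are output in that order, $16+6j$ is still in $L$ when $15+6j$ is output; hence $16+6j$ and $15+6j$ lie together in $L$. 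Now the entries at positions $13+6j,15+6j,16+6j,17+6j,18+6j$ are $16+6j,\,15+6j$ followed by the three entries $19+6j,\,18+6j,\,14+6j$ of $y_{j+1}$, a quintuple order-isomorphic to $32541$; so Lemma~\ref{GZigZag32541}, applied with $16+6j$ and $15+6j$ together in $L$, gives that at the step $16+6j$ is output one has $\{19+6j,18+6j\}\subseteq R$ and every entry then in $L$ has position $<15+6j$. Those positions carry values $\le 16+6j$, and $16+6j$ is the only one of value $\ge 16+6j$, so just after $16+6j=10+6(j+1)$ is output $L$ is empty, the output is $1\,2\,\dots\,(16+6j)$, and $\{19+6j,18+6j\}=\{13+6(j+1),12+6(j+1)\}\subseteq R$: this is $\mathcal C_{j+1}$.

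Finally $\mathcal C_i$ gives a step at which $\{13+6i,12+6i\}\subseteq R$, $L$ is empty, the output is $1\,2\,\dots\,(10+6i)$, and the remaining input is $S_i=(14+6i)(15+6i)(11+6i)$. The next entry to be output is $11+6i$, which is preceded in the input by $14+6i$ and $15+6i$; both exceed $11+6i$, so both must eventually be moved to $L$, where by Lemma~\ref{lem:Lrelated} $14+6i$ must sit above $15+6i$. As $14+6i$ enters $R$ before $15+6i$, this requires $R$ to hold $14+6i$ and $15+6i$ simultaneously, alongside whatever part of $\{13+6i,12+6i\}$ has not yet been pushed onto $L$; a short check of the few possibilities shows that in each one either $R$ is asked to hold more than $3$ entries, or $15+6i$ is pushed above $14+6i$ in $L$ (impossible by Lemma~\ref{lem:Lrelated}), or $13+6i$ ends up above $12+6i$ in $L$ so that $12+6i$ can never be output before $13+6i$ --- a contradiction in every case, so $G_i\notin S(3,\infty)$. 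The step I expect to demand the most care is this bookkeeping: turning the conditional conclusions of Lemmas~\ref{GZigZag32514} and~\ref{GZigZag32541} into the clean configurations of $\mathcal C_j$ (and the final case split on $S_i$) requires using the precise block structure of $G_i$, and Lemmas~\ref{lem:sbm} and~\ref{lem:Lrelated}, to rule out any stray entry lingering in $R$ or $L$ at each critical moment.
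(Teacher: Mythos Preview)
Your argument is correct and follows the paper's own route: use Lemma~\ref{lem:67R} on the prefix $P$, then inductively alternate Lemmas~\ref{GZigZag32514} and~\ref{GZigZag32541} along the $x_j,y_j$ blocks, and finish with a short case analysis on $S_i$; your invariant $\mathcal C_j$ simply packages the paper's running claims (together with the extra bookkeeping that $L$ is empty and the output is exactly $1,\dots,10+6j$) into a single inductive statement. One remark: the stated conclusion of Lemma~\ref{GZigZag32514} contains a typo---its proof and its use in the paper show that it is $a_{i_3},a_{i_5}$ (the entries playing ``$5$'' and ``$4$''), not $a_{i_3},a_{i_4}$, that must appear together in $L$---so with the intended reading you obtain $16+6j,15+6j\in L$ directly and your bridging step via $11+6j$ becomes unnecessary.
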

\begin{proof} 
Suppose for contradiction that $G_i$ can be sorted by some sorting process. Since $P$ is order isomorphic to $243651$, by 
 Lemma~\ref{lem:67R} in any sorting process $7,6$ appear together in  stack $R$. 
Next,  $7\ 6\ 10 \ 5 \ 9$ is order isomorphic to $32514$   so 
 by Lemma~\ref{GZigZag32514} since $7,6$ appear together in stack $R$ we must have that $10,9$ appear together in stack $L$ at some point in the process.
 
 Now consider $x_jy_j=(10 \ 5 \ 9 \ 13\ 12 \ 8)_{6j}$, and assume that $10_{6j},9_{6j}$ both appear in stack $L$ together. Since 
$ (10 \ 9  \ 13\ 12 \ 8)_{6j}$ is order isomorphic to $32541$ by Lemma~\ref{GZigZag32541}
$13_{6j},12_{6j}$ must be placed together in stack $R$ and stay there until $10_{6j}$ is output.

 Next consider $y_jx_{j+1}=( 13\ 12 \ 8 \ 16 \ 11 \ 15)_{6j}$, and assume that $13_{6j},12_{6j}$ both appear in stack $R$ together. Then since 
$ (13\ 12  \ 16 \ 11 \ 15)_{6j}$ is order isomorphic to $32514$ by Lemma~\ref{GZigZag32514}
we have $16_{6j},15_{6j}$ appear together in stack $L$. Note that $16_{6j},15_{6j}=10_{6(j+1)},9_{6(j+1)}$, so putting the above observations together we see that for all $0\leq j\leq i$ we have $10_{6j},9_{6j}$ both appear in stack $L$ together and 
$13_{6j},12_{6j}$ appear together in stack $R$ and stay there until $10_{6j}$ is output.

Now we consider the suffix $$x_iy_iS_i= (10 \ 5 \ 9 \ 13 \ 12\  8\ 14 \ 15 \ 11)_{6i}$$ where $10_{6i},9_{6i}$ are together in stack $L$. Lemma~\ref{GZigZag32541} tells us not only that 
$13_{6i},12_{6i}$ appear together in stack $R$ and stay there until $10_{6i}$ is output, but that anything sitting underneath  $10_{6i}$  in stack $L$ comes {\em before} $9_{6i}$ in $G_i$, so in particular $14_{6i}, 15_{6i}$  are not underneath $10_{6i}$. All possible processes to sort $x_iy_iS$ are shown in Fig.~\ref{fig:moment}. All possible sorting moves fail, which means $G_i$ cannot be sorted.
\qed\end{proof}
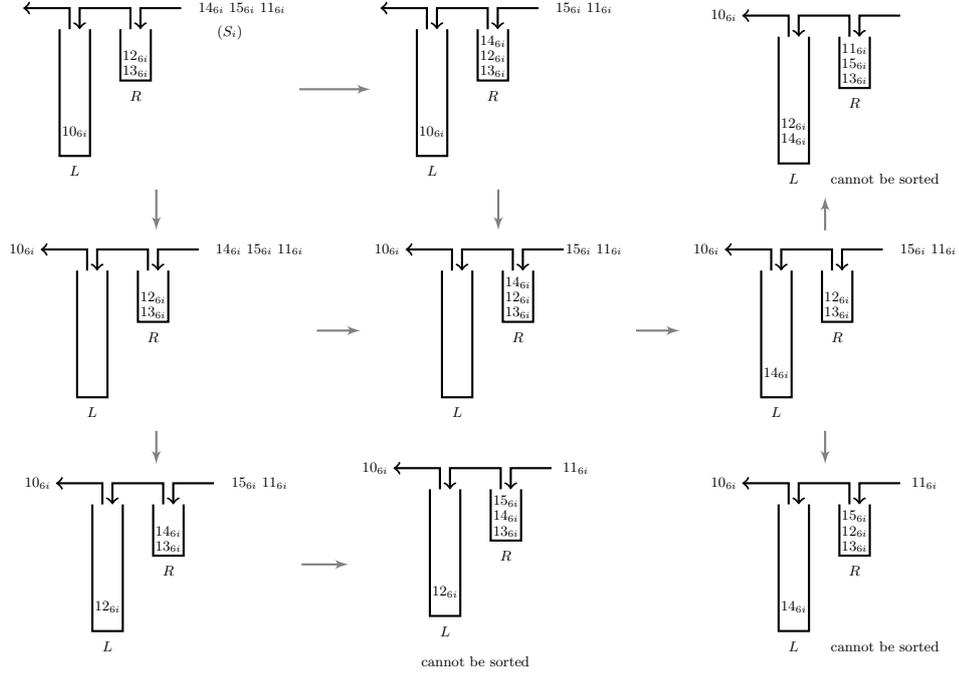
\begin{figure}[ht]
	\begin{center}
		
		\begin{tikzpicture}[scale=2, node distance = 2.5cm, auto]
		
		\node(root)
		{
			\begin{tikzpicture}[thick,scale=0.4, every node/.style={scale=0.6}]
			\draw[thick] (0,1.7) -- (0,-2.5) -- (1,-2.5) -- (1,1.7);
			\draw (2.5,-.5) node {$R$};
			\draw[thick] (2,1.7) -- (2,0) -- (3,0) -- (3,1.7);
			\draw (0.5,-3) node {$L$};
			\draw[ ->] (4,2.4) -- node[above] {} (2.66,2.4) -- (2.66,1.7);
			\draw (6.0,2.4) node {$14_{6i}$ $15_{6i}$ $11_{6i}$};
			\draw[bend left, ->] (2.33,1.7) -- (2.33,2.4) -- node[above] {} (0.66,2.4) -- (0.66,1.7);
			\draw[ <-] (-1.2,2.4) -- node[above] {} (0.33,2.4) -- (0.33,1.7);	
			\draw (2.5,0.8) node {$12_{6i}$};
			\draw (2.5,0.3) node {$13_{6i}$};
			\draw (0.5,-1.7) node {$10_{6i}$};

			\draw (5.6,1.6) node {$(S_i)$};
			
			\end{tikzpicture}
		};

		\node[right of=root, node distance=4.5cm] (root_2)
		{
			\begin{tikzpicture}[thick,scale=0.4, every node/.style={scale=0.6}]
			\draw[thick] (0,1.7) -- (0,-2.5) -- (1,-2.5) -- (1,1.7);
			\draw (2.5,-.5) node {$R$};
			\draw[thick] (2,1.7) -- (2,0) -- (3,0) -- (3,1.7);
			\draw (0.5,-3) node {$L$};
			\draw[ ->] (4,2.4) -- node[above] {} (2.66,2.4) -- (2.66,1.7);
			\draw (5.5,2.4) node {$15_{6i}$ $11_{6i}$};
			\draw[bend left, ->] (2.33,1.7) -- (2.33,2.4) -- node[above] {} (0.66,2.4) -- (0.66,1.7);
			\draw[ <-] (-1.2,2.4) -- node[above] {} (0.33,2.4) -- (0.33,1.7);	
			\draw (2.5,1.3) node {$14_{6i}$};
			\draw (2.5,0.8) node {$12_{6i}$};
			\draw (2.5,0.3) node {$13_{6i}$};
			\draw (0.5,-1.7) node {$10_{6i}$};
			\end{tikzpicture}
		};
		
		\node[below of=root, node distance=3.2cm] (root_3)
		{
			\begin{tikzpicture}[thick,scale=0.4, every node/.style={scale=0.6}]
			\draw[thick] (0,1.7) -- (0,-2.5) -- (1,-2.5) -- (1,1.7);
			\draw (2.5,-.5) node {$R$};
			\draw[thick] (2,1.7) -- (2,0) -- (3,0) -- (3,1.7);
			\draw (0.5,-3) node {$L$};
			\draw[ ->] (4,2.4) -- node[above] {} (2.66,2.4) -- (2.66,1.7);
			\draw (6.0,2.4) node {$14_{6i}$ $15_{6i}$ $11_{6i}$};
			\draw (-1.8,2.4) node {$10_{6i}$};
			\draw[bend left, ->] (2.33,1.7) -- (2.33,2.4) -- node[above] {} (0.66,2.4) -- (0.66,1.7);
			\draw[ <-] (-1.2,2.4) -- node[above] {} (0.33,2.4) -- (0.33,1.7);	
			\draw (2.5,0.8) node {$12_{6i}$};
			\draw (2.5,0.3) node {$13_{6i}$};
			\end{tikzpicture}
		};
		
		\node[below of=root_3, node distance=3.1cm] (root_4)
		{
			\begin{tikzpicture}[thick,scale=0.4, every node/.style={scale=0.6}]
			\draw[thick] (0,1.7) -- (0,-2.5) -- (1,-2.5) -- (1,1.7);
			\draw (2.5,-.5) node {$R$};
			\draw[thick] (2,1.7) -- (2,0) -- (3,0) -- (3,1.7);
			\draw (0.5,-3) node {$L$};
			\draw[ ->] (4,2.4) -- node[above] {} (2.66,2.4) -- (2.66,1.7);
			\draw (5.5,2.4) node {$15_{6i}$ $11_{6i}$};
			\draw (-1.8,2.4) node {$10_{6i}$};
			\draw[bend left, ->] (2.33,1.7) -- (2.33,2.4) -- node[above] {} (0.66,2.4) -- (0.66,1.7);
			\draw[ <-] (-1.2,2.4) -- node[above] {} (0.33,2.4) -- (0.33,1.7);	
			\draw (2.5,0.8) node {$14_{6i}$};
			\draw (2.5,0.3) node {$13_{6i}$};
			\draw (0.5,-1.7) node {$12_{6i}$};
			\end{tikzpicture}
		};
		
		\node[right of=root_4, node distance=4.2cm] (root_6)
		{
			\begin{tikzpicture}[thick,scale=0.4, every node/.style={scale=0.6}]
			\draw[thick] (0,1.7) -- (0,-2.5) -- (1,-2.5) -- (1,1.7);
			\draw (2.5,-.5) node {$R$};
			\draw[thick] (2,1.7) -- (2,0) -- (3,0) -- (3,1.7);
			\draw (0.5,-3) node {$L$};
			\draw[ ->] (4,2.4) -- node[above] {} (2.66,2.4) -- (2.66,1.7);
			\draw (4.8,2.4) node {$11_{6i}$};
			\draw (-1.8,2.4) node {$10_{6i}$};
			\draw[bend left, ->] (2.33,1.7) -- (2.33,2.4) -- node[above] {} (0.66,2.4) -- (0.66,1.7);
			\draw[ <-] (-1.2,2.4) -- node[above] {} (0.33,2.4) -- (0.33,1.7);	
			\draw (2.5,1.3) node {$15_{6i}$};
			\draw (2.5,0.8) node {$14_{6i}$};
			\draw (2.5,0.3) node {$13_{6i}$};
			\draw (0.5,-1.7) node {$12_{6i}$};
			\draw (1.5,-4.0) node {cannot be sorted};
			\end{tikzpicture}
		};
		
		\node[right of=root_3, node distance=4.5cm] (root_5)
		{
			\begin{tikzpicture}[thick,scale=0.4, every node/.style={scale=0.6}]
			\draw[thick] (0,1.7) -- (0,-2.5) -- (1,-2.5) -- (1,1.7);
			\draw (2.5,-.5) node {$R$};
			\draw[thick] (2,1.7) -- (2,0) -- (3,0) -- (3,1.7);
			\draw (0.5,-3) node {$L$};
			\draw[ ->] (4,2.4) -- node[above] {} (2.66,2.4) -- (2.66,1.7);
			\draw (5.0,2.4) node {$15_{6i}$ $11_{6i}$};
			\draw (-1.8,2.4) node {$10_{6i}$};
			\draw[bend left, ->] (2.33,1.7) -- (2.33,2.4) -- node[above] {} (0.66,2.4) -- (0.66,1.7);
			\draw[ <-] (-1.2,2.4) -- node[above] {} (0.33,2.4) -- (0.33,1.7);	
			\draw (2.5,1.3) node {$14_{6i}$};
			\draw (2.5,0.8) node {$12_{6i}$};
			\draw (2.5,0.3) node {$13_{6i}$};
			\end{tikzpicture}
		};
		
		\node[right of=root_5, node distance=4.3cm] (root_8)
		{
			\begin{tikzpicture}[thick,scale=0.4, every node/.style={scale=0.6}]
			\draw[thick] (0,1.7) -- (0,-2.5) -- (1,-2.5) -- (1,1.7);
			\draw (2.5,-.5) node {$R$};
			\draw[thick] (2,1.7) -- (2,0) -- (3,0) -- (3,1.7);
			\draw (0.5,-3) node {$L$};
			\draw[ ->] (4,2.4) -- node[above] {} (2.66,2.4) -- (2.66,1.7);
			\draw (5.5,2.4) node {$15_{6i}$ $11_{6i}$};
			\draw (-1.8,2.4) node {$10_{6i}$};
			\draw[bend left, ->] (2.33,1.7) -- (2.33,2.4) -- node[above] {} (0.66,2.4) -- (0.66,1.7);
			\draw[ <-] (-1.2,2.4) -- node[above] {} (0.33,2.4) -- (0.33,1.7);	
			\draw (2.5,0.8) node {$12_{6i}$};
			\draw (2.5,0.3) node {$13_{6i}$};
			\draw (0.5,-1.7) node {$14_{6i}$};
			\end{tikzpicture}
		};
		
		\node[below of=root_8, node distance=3.1cm] (root_9)
		{
			\begin{tikzpicture}[thick,scale=0.4, every node/.style={scale=0.6}]
			\draw[thick] (0,1.7) -- (0,-2.5) -- (1,-2.5) -- (1,1.7);
			\draw (2.5,-.5) node {$R$};
			\draw[thick] (2,1.7) -- (2,0) -- (3,0) -- (3,1.7);
			\draw (0.5,-3) node {$L$};
			\draw[ ->] (4,2.4) -- node[above] {} (2.66,2.4) -- (2.66,1.7);
			\draw (4.8,2.4) node {$11_{6i}$};
			\draw (-1.8,2.4) node {$10_{6i}$};
			\draw[bend left, ->] (2.33,1.7) -- (2.33,2.4) -- node[above] {} (0.66,2.4) -- (0.66,1.7);
			\draw[ <-] (-1.2,2.4) -- node[above] {} (0.33,2.4) -- (0.33,1.7);	
			\draw (2.5,1.3) node {$15_{6i}$};
			\draw (2.5,0.8) node {$12_{6i}$};
			\draw (2.5,0.3) node {$13_{6i}$};
			\draw (0.5,-1.7) node {$14_{6i}$};
			\draw (3.5,-3.0) node {cannot be sorted};
			\end{tikzpicture}
		};
		
		\node[above of=root_8, node distance=3.1cm] (root_10)
		{
			\begin{tikzpicture}[thick,scale=0.4, every node/.style={scale=0.6}]
			\draw[thick] (0,1.7) -- (0,-2.5) -- (1,-2.5) -- (1,1.7);
			\draw (2.5,-.5) node {$R$};
			\draw[thick] (2,1.7) -- (2,0) -- (3,0) -- (3,1.7);
			\draw (0.5,-3) node {$L$};
			\draw[ ->] (4,2.4) -- node[above] {} (2.66,2.4) -- (2.66,1.7);
			\draw (4.8,2.4) node {};
			\draw (-1.8,2.4) node {$10_{6i}$};
			\draw[bend left, ->] (2.33,1.7) -- (2.33,2.4) -- node[above] {} (0.66,2.4) -- (0.66,1.7);
			\draw[ <-] (-1.2,2.4) -- node[above] {} (0.33,2.4) -- (0.33,1.7);	
			\draw (2.5,1.3) node {$11_{6i}$};
			\draw (2.5,0.8) node {$15_{6i}$};
			\draw (2.5,0.3) node {$13_{6i}$};
			\draw (0.5,-1.2) node {$12_{6i}$};
			\draw (0.5,-1.7) node {$14_{6i}$};
			\draw (3.5,-3.0) node {cannot be sorted};
			\end{tikzpicture}
		};		
		
		\path [line] (root) -- (root_2);
		\path [line] (root) -- (root_3);
		\path [line] (root_3) -- (root_4);
		\path [line] (root_3) -- (root_5);
		\path [line] (root_2) -- (root_5);
		\path [line] (root_4) -- (root_6);
		\path [line] (root_5) -- (root_8);
		\path [line] (root_8) -- (root_9);
		\path [line] (root_8) -- (root_10);
	\end{tikzpicture}
	
	\caption{All possible ways to sort  $x_iy_iS$}
	\label{fig:moment}
\end{center}
\end{figure}

The idea of the preceding proof can be summarised informally as follows. The prefix $P$  forces $7,6$ to be together in stack $R$, then Lemmas 4 and 5 alternately imply that the $10_{6j},9_{6j}$ terms of $x_j$ must be in stack $L$ and the $13_{6j},12_{6j}$ terms of $y_j$ must be in stack $R$. When we reach the suffix $S_i$ the fact that certain entries are forced to be in a particular stack means we are unable to sort the final terms.
We now show that if a single entry is removed from $G_i$, we can choose to place the $10_{6j},9_{6j}$ terms in stack $R$ and  $13_{6j},12_{6j}$ terms in stack $L$, which allows the suffix to be sorted.

\begin{lemma} \label{lem:can_sort1}
Let $0\leq j\leq i$. If  stack $R$ contains one or both of $10_{6j}, 9_{6j}$ in ascending order,  and $y_j\dots y_iS_i$ is to be input as in Fig.~\ref{fig:Premoved2}, 
then there is a sorting procedure to output all remaining entries in order.
\end{lemma}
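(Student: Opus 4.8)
The plan is to exhibit an explicit sorting procedure, split into $i-j+1$ \emph{stages}, one per remaining block, and to prove it succeeds by induction on $i-j$. I carry the invariant that at the start of stage $k$ (for $j\le k\le i$): stack $L$ is empty; the entries output so far are exactly those of value less than $8_{6k}$; stack $R$ contains whichever of $10_{6k}$ and $9_{6k}$ are present, with $9_{6k}$ above $10_{6k}$ when both are; and the input still to be read is $y_k\,x_{k+1}\,y_{k+1}\cdots x_i\,y_i\,S_i$. The case $k=j$ is precisely the hypothesis of the lemma; for $k>j$ both of $10_{6k},9_{6k}$ will be present.

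For a stage with $k<i$ the moves are: read $13_{6k},12_{6k},8_{6k}$ one at a time, each time pushing the new entry onto $R$ and \emph{immediately} moving it to $L$; dealing with them singly is forced, since holding $10_{6k}$ or $9_{6k}$ together with $13_{6k}$ and $12_{6k}$ in $R$ would make a $132$ pattern there, contradicting Lemma~\ref{lem:sbm}. Now $L$ reads $13_{6k}\,12_{6k}\,8_{6k}$ from the bottom, so $8_{6k}$ is on top and is output; then each of $9_{6k},10_{6k}$ present is moved onto $L$ and output; then $10_{6(k+1)}$ and $5_{6(k+1)}$ are read into $R$, $5_{6(k+1)}$ is moved to $L$ and output, and $12_{6k}$ then $13_{6k}$ are popped from $L$ and output; finally $9_{6(k+1)}$ is read into $R$ on top of $10_{6(k+1)}$. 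One checks at every move that $R$ holds at most three entries; that each push onto $L$ puts there a value smaller than the current top and consistent with Lemma~\ref{lem:Lrelated} (the entries of $y_k$ enter $L$ in input order, legal because they decrease in value there, and $8_{6k},9_{6k},10_{6k},5_{6(k+1)}$ are each placed above entries that are both larger in value and earlier in $G_i$); and that outputs come out increasing. The resulting state is the stage-$(k+1)$ instance of the invariant.

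The base case is stage $k=i$. It starts the same way: $13_{6i},12_{6i},8_{6i}$ are shuttled through $R$ into $L$, $8_{6i}$ is output, and those of $9_{6i},10_{6i}$ present are moved to $L$ and output, leaving $R$ empty, $L$ reading $13_{6i}\,12_{6i}$ from the bottom, and everything below $11_{6i}$ output. Now $S_i=14_{6i}\,15_{6i}\,11_{6i}$ is read into $R$; $11_{6i}$ is moved to $L$ and output; $12_{6i}$ and $13_{6i}$ are popped and output; and finally $15_{6i}$ and $14_{6i}$ are moved from $R$ to $L$ --- legal because $14_{6i}$ precedes $15_{6i}$ in $G_i$ and is smaller, so Lemma~\ref{lem:Lrelated} wants $14_{6i}$ above $15_{6i}$, as it is --- and output in order. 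This empties the machine, completing the sort of the whole suffix.

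The real work is the move-by-move verification that $|R|\le 3$ and that Lemmas~\ref{lem:Lrelated} and \ref{lem:sbm} are respected throughout; this is the main (and essentially only) obstacle, and a table or diagram following the two stacks stage by stage, as in Fig.~\ref{fig:moment}, makes it a matter of inspection. The only wrinkles are the three cases at stage $j$ depending on which of $10_{6j},9_{6j}$ actually lies in $R$ --- an absent one is simply skipped where it would otherwise be output --- and that stage $i$ reads $S_i$ in place of a trailing $x_{i+1}$.
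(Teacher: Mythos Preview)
Your proof is correct and follows essentially the same approach as the paper: an explicit stage-by-stage sorting procedure with the same invariant (at the start of stage $k$, stack $L$ is empty and $R$ holds $10_{6k},9_{6k}$ in ascending order), the same moves per stage, and the same final handling of $S_i$. Your write-up is more detailed than the paper's---tracking stack contents move by move and invoking Lemmas~\ref{lem:Lrelated} and~\ref{lem:sbm} as sanity checks---but the underlying procedure is identical.
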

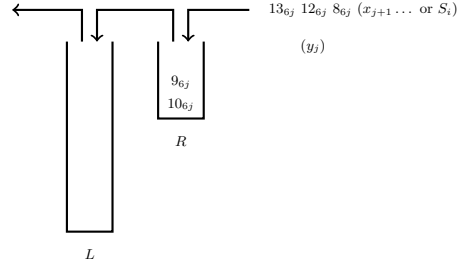
\begin{figure}[ht]
	\begin{center}
		\begin{tikzpicture}[thick,scale=0.6, every node/.style={scale=0.6}]
		\draw[thick] (0,1.7) -- (0,-2.5) -- (1,-2.5) -- (1,1.7);
		\draw (2.5,-.5) node {$R$};
		\draw[thick] (2,1.7) -- (2,0) -- (3,0) -- (3,1.7);
		\draw (0.5,-3) node {$L$};
		\draw[ ->] (4,2.4) -- node[above] {} (2.66,2.4) -- (2.66,1.7);

		\draw (6.5,2.4) node {$13_{6j} \ 12_{6j} \ 8_{6j}  \ (x_{j+1}\dots \text{ or } S_i)$};
		\draw[bend left, ->] (2.33,1.7) -- (2.33,2.4) -- node[above] {} (0.66,2.4) -- (0.66,1.7);
		\draw[ <-] (-1.2,2.4) -- node[above] {} (0.33,2.4) -- (0.33,1.7);
		
		\draw (5.4,1.6) node {$(y_{j})$};
		
		\draw (2.5,.8) node {$9_{6j}$};
		\draw (2.5,0.3) node {$10_{6j}$};
		\end{tikzpicture}
		\caption{A sortable configuration}
		\label{fig:Premoved2}
	\end{center}
\end{figure}
\begin{proof}
For $j<i$ 
 move $13_{6j}, 12_{6j}$ into stack $L$, output $8_{6j}, 9_{6j},10_{6j}$, move $16_{6j}=10_{6(j+1)}$ into stack $R$, output $11_{6j}=5_{6(j+1)}$, output  $13_{6j}, 12_{6j}$ from stack $L$ and input $15_{6j}=9_{6(j+1)}$ so that the configuration has the same form as Fig.~\ref{fig:Premoved2} with $j$ incremented by 1.

For $j=i$  the remaining input is $(13 \ 12 \ 8 \  14 \ 15 \ 11)_{6j}$.   Put $13_{6i},12_{6i} $ in stack $L$ in order,  output $8_{6i}, 9_{6i}, 10_{6i}$, put $  14_{6i},15_{6i}$ in stack $R$ and output $11_{6i}$, $12_{6i},13_{6i}$, move $15_{6i}$ into stack $L$ and output $14_{6i}$ then $15_{6i}$.

If one of $ 9_{6j},10_{6j}$ is missing, use the same procedure ignoring the missing entry.
\qed\end{proof}

\begin{lemma} \label{lem:can_sort2}
Let $0\leq j\leq i$. 
If stack $L$ contains one or both of $12_{6j}, 13_{6j}$ in ascending order, and $x_{j+1}\dots S_i$ (or just $S_i$ if $j=i$) is to be input as in Fig.~\ref{fig:67_2}, then there is a sorting procedure to output all remaining entries in order.
\end{lemma}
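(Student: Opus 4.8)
The plan is to mirror the argument of Lemma~\ref{lem:can_sort1}, exhibiting an explicit sorting procedure and then inducting on $i-j$. The starting configuration has $12_{6j}$ and $13_{6j}$ (or just one of them) sitting at the bottom of stack $L$ in ascending order (so $12_{6j}$ below $13_{6j}$), stack $R$ empty, and the remaining input beginning with $x_{j+1} = (10\ 5\ 9)_{6(j+1)} = (16\ 11\ 15)_{6j}$, followed by $y_{j+1}$, and so on down to $S_i$. The key point is that everything still to be output is larger than $13_{6j}$, so leaving $12_{6j},13_{6j}$ parked at the bottom of $L$ creates no obstruction until the very end; we simply need to bring the incoming entries through and out in increasing order, and then flush $12_{6j},13_{6j}$ last.

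First I would handle the inductive step $j<i$. When $x_{j+1}y_{j+1} = (16\ 11\ 15\ 19\ 18\ 14)_{6j}$ is at the head of the input: push $16_{6j}$ into stack $R$, output $11_{6j}$ (the current smallest needed entry), push $15_{6j}$ into $R$ on top of... no — instead move $16_{6j}$ across to $L$? That would violate nothing since $16_{6j}>13_{6j}$, but we want $L$ to end up in the form required for the next round. The cleaner route: push $16_{6j}$ to $R$, output $11_{6j}$, then we still must deal with $15_{6j}$; push $15_{6j}$ to $R$ — but now $R$ holds $16_{6j},15_{6j}$ with $15$ on top, which is the analogue of the configuration in Fig.~\ref{fig:Premoved2} (with $10_{6(j+1)}=16_{6j}$ below $9_{6(j+1)}=15_{6j}$), and $y_{j+1}\dots S_i$ is the remaining input. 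So after these three moves we are exactly in the hypothesis of Lemma~\ref{lem:can_sort1} with index $j+1$, and that lemma finishes the job. (Equivalently one could phrase the whole thing as a single induction interleaving the two lemmas.)

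For the base case $j=i$, the remaining input is just $S_i=(14\ 15\ 11)_{6i}$ with $12_{6i},13_{6i}$ at the bottom of $L$ and $R$ empty. Push $14_{6i}$ into $R$, push $15_{6i}$ into $R$ (now $R$ holds $14_{6i}$ then $15_{6i}$), input and output $11_{6i}$ directly — wait, $11_{6i}$ must pass through both stacks, so push $11_{6i}$ to $R$ (now $R$ has three items, which is allowed since $t\ge 3$), move $11_{6i}$ to $L$ on top of $13_{6i}$? No: $11_{6i}<13_{6i}$, so by Lemma~\ref{lem:Lrelated} that is fine only if $11_{6i}$ is output before $13_{6i}$, which it is. Output $11_{6i}$, then $12_{6i}$, then $13_{6i}$ from $L$; then move $15_{6i}$ from $R$ to $L$, move $14_{6i}$ from $R$ to $L$ on top — again $14_{6i}<15_{6i}$ so $14$ on top is correct — then output $14_{6i}$, $15_{6i}$. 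If only one of $12_{6i},13_{6i}$ is present, run the same moves ignoring the absent entry. In all cases the output is the increasing sequence, as required.

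The main obstacle is purely bookkeeping: one must check at every move that stack $R$ never exceeds depth $3$ and that no entry is ever placed on top of a smaller entry already committed to stack $L$ that still has to be output (Lemma~\ref{lem:Lrelated}). Since at each stage the only entries resident in $L$ are the two ``parked'' values together with at most one transient small value that is immediately output, and $R$ carries at most the two ``$x$-type'' values plus one transient, the depth bound and the ordering constraint are both met; I would present this as a short explicit move list with a one-line justification of each, exactly as in the proof of Lemma~\ref{lem:can_sort1}.
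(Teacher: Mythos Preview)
Your overall strategy --- reduce to Lemma~\ref{lem:can_sort1} by processing $x_{j+1}$ and landing in the configuration of Fig.~\ref{fig:Premoved2} with index $j+1$ --- is exactly what the paper does. But your execution has a concrete gap and a couple of related slips.

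First, the orientation of stack $L$ is backwards: in Fig.~\ref{fig:67_2} (and as required for correct output) $12_{6j}$ sits \emph{on top of} $13_{6j}$, not below it. Second, your ``key point'' that everything still to be output is larger than $13_{6j}$ is false: the very next block $x_{j+1}$ contains $11_{6j}=5_{6(j+1)}$, and you yourself output it first. So $12_{6j},13_{6j}$ cannot be ``parked at the bottom of $L$'' and flushed last; they must be output immediately after $11_{6j}$ and before anything else.

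This is precisely where your inductive step breaks. After ``push $16_{6j}$ to $R$, output $11_{6j}$, push $15_{6j}$ to $R$'' you assert that you are ``exactly in the hypothesis of Lemma~\ref{lem:can_sort1} with index $j+1$''. You are not: $12_{6j}$ and $13_{6j}$ are still sitting in $L$, whereas Fig.~\ref{fig:Premoved2} has $L$ empty. Lemma~\ref{lem:can_sort1} will then try to push $13_{6(j+1)},12_{6(j+1)}$ into $L$ on top of them, and the output order is ruined. The fix is immediate and is what the paper does: after $11_{6j}$ comes out, output $12_{6j}$ then $13_{6j}$ from $L$ (they are on top in that order), \emph{then} push $15_{6j}=9_{6(j+1)}$ into $R$. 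Now $L$ really is empty and the reduction to Lemma~\ref{lem:can_sort1} goes through. Your base case $j=i$ is fine once the $L$-orientation is corrected, and in fact matches the paper's.
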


			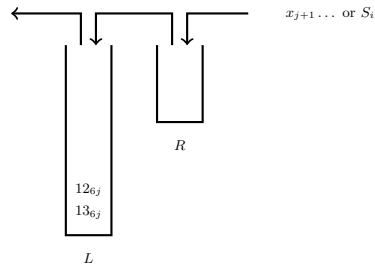
\begin{figure}[ht]
	\begin{center}
		\begin{tikzpicture}[thick,scale=0.6, every node/.style={scale=0.6}]
			\draw[thick] (0,1.7) -- (0,-2.5) -- (1,-2.5) -- (1,1.7);
			\draw (2.5,-.5) node {$R$};
		\draw[thick] (2,1.7) -- (2,0) -- (3,0) -- (3,1.7);
		\draw (0.5,-3) node {$L$};
		\draw[ ->] (4,2.4) -- node[above] {} (2.66,2.4) -- (2.66,1.7);

		\draw (5.8,2.4) node {$x_{j+1} \dots \text{ or } S_i$};
		\draw[bend left, ->] (2.33,1.7) -- (2.33,2.4) -- node[above] {} (0.66,2.4) -- (0.66,1.7);
		\draw[ <-] (-1.2,2.4) -- node[above] {} (0.33,2.4) -- (0.33,1.7);

		\draw (0.5,-1.5) node {$12_{6j}$};
		\draw (0.5,-2) node {$13_{6j}$};
		\end{tikzpicture}
		\caption{Another sortable configuration} 
		\label{fig:67_2}
	\end{center}
\end{figure}

\begin{proof}
If $j<i$
 move $10_{6(j+1)}$ into stack $R$, output $5_{6(j+1)}, 12_{6j},13_{6j}$, move $9_{6(j+1)}$ to stack $R$ to reach the  configuration in  Fig.~\ref{fig:Premoved2}, which we can sort by Lemma~\ref{lem:can_sort1}.
 If $j=i$ then the remaining input is just $S_i=(14\ 15 \ 11)_{6i}$: move $14_{6i},15_{6i}$ to stack $R$, then output all entries.

If one of $ 12_{6j},13_{6j}$ is missing, use the same procedure ignoring the missing entry.
		\qed\end{proof}

\begin{proposition}
	\label{prop:GiBases}
	 Let $G_i'$ be a permutation obtained by removing a single entry from $G_i$. Then $G'_i\in S(3,\infty)$.
\end{proposition}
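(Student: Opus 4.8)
The strategy is to case-split on which entry is removed from $G_i$, and in each case exhibit an explicit sorting procedure. The informal summary just before the statement already tells us the key idea: the obstruction to sorting $G_i$ was that the $10_{6j},9_{6j}$ terms were \emph{forced} into stack $L$ and the $13_{6j},12_{6j}$ terms \emph{forced} into stack $R$, which then jammed the suffix $S_i$. Removing a single entry breaks one link in the alternating chain of applications of Lemmas~\ref{GZigZag32514} and~\ref{GZigZag32541}, and once the chain is broken we have the freedom to instead place the $10_{6j},9_{6j}$ terms in stack $R$ and the $13_{6j},12_{6j}$ terms in stack $L$ from that point onwards; Lemmas~\ref{lem:can_sort1} and~\ref{lem:can_sort2} then package exactly the statement that such configurations can be completed to a full sort. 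So the proof is mostly bookkeeping: identify, for each removed entry, a point in the input stream from which we can reach one of the two sortable configurations of Fig.~\ref{fig:Premoved2} or Fig.~\ref{fig:67_2}, and sort the prefix up to that point by hand.

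\textbf{Key steps in order.} First I would handle the removal of an entry from the prefix $P=2\,4\,3\,7\,6\,1$: here the blocks $x_0,y_0,\dots$ are untouched, but with one of $2,4,3,7,6,1$ gone the configuration forcing $7,6$ together in $R$ (Lemma~\ref{lem:67R}) no longer applies, so I can push the remaining small entries of $P$ directly through both stacks and then, when $x_0=(10\,5\,9)$ arrives, choose to place $10,9$ in stack $R$ in ascending order — reaching precisely Fig.~\ref{fig:Premoved2} with $j=0$ — and finish by Lemma~\ref{lem:can_sort1}. (A couple of the sub-cases, e.g. removing $1$, need $1$ to be output first, but this is immediate since $1$ is the minimum.) Second, the removal of an entry from the suffix $S_i=(14\,15\,11)_{6i}$: here I would run the \emph{original} forced sort all the way to the configuration at the top of Fig.~\ref{fig:moment}, and observe that with one of $14_{6i},15_{6i},11_{6i}$ absent the jam disappears — a direct check of the (now shorter) input finishes it. Third, and this is the bulk of the argument, the removal of an entry from some block $x_j$ or $y_j$ with $0\le j\le i$. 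I would first argue that for $j'<j$ the forced behaviour still holds, so just before the damaged block we are in a standard configuration; then I would show that the missing entry lets us break into Fig.~\ref{fig:Premoved2} or Fig.~\ref{fig:67_2} at index $j$ (or $j-1$): for instance if $9_{6j}$ or $10_{6j}$ is removed we are already (after outputting the small entries) in the hypothesis of Lemma~\ref{lem:can_sort1} with the surviving one of $10_{6j},9_{6j}$ in $R$; if $5_{6j}$ is removed we can still place $10_{6j},9_{6j}$ in $R$ and output nothing in between, again landing in Fig.~\ref{fig:Premoved2}; if $12_{6j}$ or $13_{6j}$ is removed we put the survivor in $L$ and reach Fig.~\ref{fig:67_2}; if $8_{6j}$ is removed, put $13_{6j},12_{6j}$ in $L$ and reach Fig.~\ref{fig:67_2} with that $j$. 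In every case Lemma~\ref{lem:can_sort1} or~\ref{lem:can_sort2} closes the argument. Finally I would note that these cases are exhaustive since every entry of $G_i$ lies in $P$, in some $x_j$ or $y_j$, or in $S_i$.

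\textbf{Main obstacle.} The delicate point is \emph{not} the existence of a finishing procedure — Lemmas~\ref{lem:can_sort1} and~\ref{lem:can_sort2} hand that to us — but rather verifying that we can legally \emph{reach} the advertised configuration given the depth-$3$ constraint on stack $R$, and that for blocks $x_{j'},y_{j'}$ \emph{preceding} the damaged one we have not painted ourselves into a corner. For the preceding blocks there is a choice: either replay the forced sort (which is legal, since it is what an optimal sorter of $G_i$ would do up to that point) or, more cleanly, sort them greedily block-by-block; I would check that the greedy "put $10_{6j'},9_{6j'}$ into $R$, flush, put $13_{6j'},12_{6j'}$ into $L$, flush" loop never needs more than $3$ cells in $R$ — and indeed it never holds more than $2$ of the $x$-type entries plus possibly one incoming entry, so depth $3$ suffices. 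The only other thing to watch is the handful of boundary cases ($j=0$, $j=i$, and removal of $1$ or of an entry of $S_i$), which I would dispatch with short explicit move sequences mirroring those already written out in the proofs of Lemmas~\ref{lem:can_sort1} and~\ref{lem:can_sort2}.
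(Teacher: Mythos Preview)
Your plan matches the paper's proof almost exactly: the same three-way case split ($P$, the $x_s/y_s$ blocks, $S_i$), the same use of Lemmas~\ref{lem:can_sort1} and~\ref{lem:can_sort2} as finishing moves, and the same idea of running the forced sort up to the damaged block. One sub-case, however, does not go through as you describe it.

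When $9_{6s}$ is removed you claim that, after outputting the small entries $5_{6s},6_{6s},7_{6s}$, you can leave the survivor $10_{6s}$ alone in stack $R$ and invoke Lemma~\ref{lem:can_sort1}. This is not possible. At the moment $x_s$ begins, the forced sort has $R=[7_{6s},6_{6s}]$ with $7_{6s}$ at the bottom, and $10_{6s}$ arrives \emph{before} $5_{6s}$. Since $10_{6s}$ can only be pushed on top of $7_{6s}$, outputting $7_{6s}$ forces $10_{6s}$ into $L$, from which it can never return to $R$; every legal sequence that outputs $5_{6s},6_{6s},7_{6s}$ ends with $10_{6s}$ in $L$, not in $R$. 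The paper handles this case differently: it pushes $13_{6s},12_{6s}$ through to $L$ \emph{while $7_{6s}$ and $10_{6s}$ are still in $R$}, then flushes $7_{6s},8_{6s},10_{6s}$ through $L$, landing in the configuration of Fig.~\ref{fig:67_2} (not Fig.~\ref{fig:Premoved2}) and finishing via Lemma~\ref{lem:can_sort2}. So the target lemma for this sub-case is the other one, and reaching it requires the non-obvious interleaving of the $y_s$ entries.

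A smaller point: your ``greedy alternative'' for the undamaged preceding blocks (put $10_{6j'},9_{6j'}$ into $R$, put $13_{6j'},12_{6j'}$ into $L$) is not actually available when $P$ is intact --- Lemmas~\ref{lem:67R}, \ref{GZigZag32514}, \ref{GZigZag32541} show the sort is genuinely forced there, so $10_{6j'},9_{6j'}$ must go to $L$. Since you also list ``replay the forced sort'' as an option, and that is what the paper does, this does not break the argument; but the greedy route should be dropped.
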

\begin{proof}
We give a deterministic procedure to sort $G_i'$. There are three cases depending on from where the entry is removed. 

\smallskip\noindent
\textit{Term removed from $P$.}
		Let $P'$ be the factor $P$ with one entry removed.
We claim that there is a sorting sequence for $P' x_0$ 
which outputs the smallest six items in order and leaves $10,9$ in stack $R$.
To show this we simply consider all cases.
\begin{enumerate}
	\item If $1$ is removed,  $2,4,3$ can be output in order, then $7,6$ placed in stack $L$, $10$ in stack $R$, then $5,6,7$ output, and $9$ placed on top of $10$ in stack $R$.
	
	\item  If $2,3$, or $4$ are removed, write   $P'=ab761$ with  $a,b\in\{2,3,4\}$. Place $a,b$ in stack $R$, move $7,6$ into stack $L$, output $1$, then output $a,b$ in the correct order, then move $10$ into stack $R$, output $5,6,7$ and move $9$ into stack $R$.
		
	\item  If $6$ or $7$ is removed,   write  $P'=243a1$ with  $a\in\{7,6\}$. Place $4,3,2$ in stack $L$ in order, move $a$ into stack $R$, output 1 then $2,3,4$, then move $a$ into stack $L$, move $10$ into stack $R$, output $5,a$ and move $9$ into stack $R$.
\end{enumerate}
	
Thus after inputting $P'x_0$ we have the configuration shown in Fig.~\ref{fig:Premoved2} with $j=0$, which we can sort by Lemma~\ref{lem:can_sort1}.

\smallskip\noindent
\textit{Term removed from $x_s,  0\leq s\leq i$.}
	
	Input $P$ leaving $6,7$ in stack $R$, which brings us to the configuration in Fig.~\ref{fig:67} with $j=0$. 
 Now assume we have input $P\dots x_{j-1}y_{j-1}$ with  $j\leq s$ (note  the convention that $x_{-1},y_{-1}$ are empty) and the configuration is as in Fig.~\ref{fig:67}.
			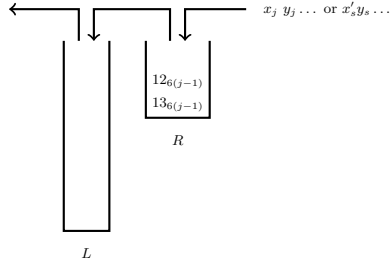
\begin{figure}[ht]
	\begin{center}
		\begin{tikzpicture}[thick,scale=0.6, every node/.style={scale=0.6}]
		\draw[thick] (0,1.7) -- (0,-2.5) -- (1,-2.5) -- (1,1.7);
		\draw (2.5,-.5) node {$R$};
		\draw[thick] (1.8,1.7) -- (1.8,0) -- (3.2,0) -- (3.2,1.7);
		\draw (0.5,-3) node {$L$};
		\draw[ ->] (4,2.4) -- node[above] {} (2.66,2.4) -- (2.66,1.7);

		\draw (5.8,2.4) node {$x_j\ y_j\dots \text{ or } x_s'y_s\dots $};
		\draw[bend left, ->] (2.33,1.7) -- (2.33,2.4) -- node[above] {} (0.66,2.4) -- (0.66,1.7);
		\draw[ <-] (-1.2,2.4) -- node[above] {} (0.33,2.4) -- (0.33,1.7);

		\draw (2.5,.8) node {$12_{6(j-1)}$};
		\draw (2.5,0.3) node {$13_{6(j-1)}$};
		\end{tikzpicture}
		\caption{Configuration after $P\dots x_{j-1}y_{j-1}$ is input }
		\label{fig:67}
	\end{center}
\end{figure}

	If $j<s$ we can input $x_jy_j$ into the stacks to arrive at the same configuration with $j$ incremented by $1$, as follows: move $10_{6j}$ to stack $L$, output $5_{6j},6_{6j}=12_{6(j-1)},7_{6j}=13_{6(j-1)}$, move $9_{6j}$ to stack $L$, move $13_{6j},12_{6j}$ to stack $R$, output $8_{6j},9_{6j},10_{6j}$.

	If $j=s$, we proceed as follows:
		\begin{enumerate}\item If $5_{6s}$ removed, output $6_{6s}=12_{6(s-1)},7_{6s}=12_{6(s-1)}$, move $9_{6s},10_{6s}$ to stack $R$, to reach the configuration in Fig.~\ref{fig:Premoved2} with $j=s$. From here the remaining entries can be sorted by Lemma~\ref{lem:can_sort1}.
		\item If $10_{6s}$ is removed, output $5_{6s}, 6_{6s}, 7_{6s}$ and place $9_{6s}$ in stack $R$,  to reach the configuration in Fig.~\ref{fig:Premoved2} with $j=s$ and $10_{6s}$ missing. From here the remaining entries can be sorted  Lemma~\ref{lem:can_sort1}.
		\item If $9_{6s}$ is removed, move $6_{6s}$ to stack $L$, move $10_{6s}$ on top of $7_{6s}$ in stack $R$, output $5_{6s}, 6_{6s}$, move $13_{6s}, 12_{6s}$ into $L$, then output $8_{6s}, 10_{6s}$. This gives the configuration in Fig.~\ref{fig:67_2} with $j=s$. From here the remaining entries can be sorted by Lemma~\ref{lem:can_sort2}.
	\end{enumerate}

\smallskip\noindent
\textit{Term removed from  $y_s, 0\leq s\leq i$ or $S_i$.}
Input $Px_0$ to reach the configuration in Fig.~\ref{fig:Si_removed} with $j=0$: move $2,3,4$ into stack $L$, $7,6$ to $R$, output $1,2,3,4$, move $10$ into $L$, output $5,6,7$ then move $9$ into $L$.

		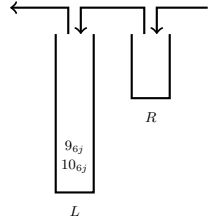
\begin{figure}[ht]
		\begin{center}
			\begin{tikzpicture}[thick,scale=0.5, every node/.style={scale=0.6}]
			\draw[thick] (0,1.7) -- (0,-2.5) -- (1,-2.5) -- (1,1.7);
			\draw (2.5,-.5) node {$R$};
			\draw[thick] (2,1.7) -- (2,0) -- (3,0) -- (3,1.7);
			\draw (0.5,-3) node {$L$};
			\draw[ ->] (4,2.4) -- node[above] {} (2.66,2.4) -- (2.66,1.7);
			\draw[bend left, ->] (2.33,1.7) -- (2.33,2.4) -- node[above] {} (0.66,2.4) -- (0.66,1.7);
			\draw[ <-] (-1.2,2.4) -- node[above] {} (0.33,2.4) -- (0.33,1.7);	
			\draw (0.5,-1.8) node {$10_{6j}$};
			\draw (0.5,-1.3) node {$9_{6j}$};

			\end{tikzpicture}

				\caption{Configuration after  $P x_0 y_0\ \dots \ x_{j}$ is input }
			\label{fig:Si_removed}
				\end{center}
	\end{figure}

Now suppose we have input $P x_0 y_0\ \dots \ x_{j}$ to reach the configuration in  Fig.~\ref{fig:Si_removed}.
If  no entry is removed from $y_j$ and $j<i$ then 
we can input $y_jx_{j+1}$ to return to the  configuration in Fig.~\ref{fig:Si_removed} with $j$ incremented by $1$ as follows: move $13_{6j},12_{6j}$ to stack $R$, output $8_{6j},9_{6j},10_{6j}$, move $10_{6(j+1)}$ to $L$, output $5_{6(j+1)}=11_{6j}, 12_{6j}, 13_{6j}$, then move $9_{6(j+1)}$ to stack $L$.

If $j=s$ ($y_s$ is removed):
\begin{enumerate}\item If $8_{6s}$ is removed, output $9_{6s},10_{6s}$, move $13_{6s},12_{6s}$ to stack $L$ to reach the configuration in Fig.~\ref{fig:67_2}, from which the remaining entries can be sorted by Lemma~\ref{lem:can_sort2}.
\item If $b\in\{13_{6s},12_{6s}\}$ is removed, place $b$ in stack $R$, output $8_{6s},9_{6s},10_{6s}$, move $b$ to stack $L$  to reach the configuration in Fig.~\ref{fig:67_2} with one of $12_{6s},13_{6s}$ removed, from which the remaining entries can be sorted a by Lemma~\ref{lem:can_sort2}.
\end{enumerate}

	If $j=i$ and the entry is removed from $S_i$, 			
	sort the remaining entries as follows:
	\begin{enumerate}
		\item If $11_{6i}$ is removed,  place $13_{6i},12_{6i}$ into stack $R$, output $8_{6i},9_{6i},10_{6i}$, then $12_{6i},13_{6i},14_{6i},15_{6i}$.
		
				\item If $b\in\{14_{6i}, 15_{6i}\}$ is removed,  place $13_{6i},12_{6i}$ into stack $R$, output $8_{6i},9_{6i},10_{6i}$, move  $12_{6i}$ into stack $L$, place $b$ on top of $13_{6i}$ in stack $R$, output $11_{6i}$ then $12_{6i}$, move $b$ into stack $L$, output $13_{6i}$ then $b$.
	\end{enumerate}	
\qed\end{proof}

\begin{theorem}
	\label{thm:GiBases}
	The set of permutations that can be sorted by a stack of depth 3 and an infinite stack in series has an infinite basis.
\end{theorem}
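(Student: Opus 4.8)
The plan is to assemble the two preceding propositions into the statement about an infinite basis. The key point is that the family $\mathscr G=\{G_i\mid i\in\N\}$ is an infinite antichain contained in the basis of $S(3,\infty)$, and a standard argument then shows the basis cannot be finite. So the proof has three logical pieces: first, each $G_i$ is a minimal non-sortable permutation (i.e.\ lies in the basis); second, the $G_i$ are pairwise incomparable under pattern containment; third, an infinite antichain of basis elements forces an infinite basis.

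For the first piece, recall that the basis of a pattern avoidance class $\mathscr C=Av(\mathscr B)$ consists exactly of the minimal permutations (under containment) not in $\mathscr C$. Proposition~\ref{prop:Gi} gives $G_i\notin S(3,\infty)$, and Proposition~\ref{prop:GiBases} shows that deleting any single entry of $G_i$ yields a permutation in $S(3,\infty)$; since every proper subpermutation of $G_i$ is contained in one obtained by deleting a single entry, every proper subpermutation of $G_i$ is sortable (sortability is closed under taking subpermutations, because a witnessing sorting process restricts). Hence $G_i$ is a minimal non-sortable permutation, i.e.\ $G_i$ belongs to the basis of $S(3,\infty)$.

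For the second piece I would argue that $\{G_i\}$ is an antichain directly: if $G_j$ were contained in $G_i$ with $j\ne i$, then since $G_j$ is not sortable and sortability is inherited by subpermutations, $G_i$ would contain a non-sortable proper subpermutation when $j<i$ (impossible, as $|G_j|<|G_i|$ and all proper subpermutations of $G_i$ are sortable by the previous paragraph), while $j>i$ is impossible on length grounds since $|G_i|<|G_j|$. Thus no $G_j$ is contained in another, so $\mathscr G$ is an infinite antichain of basis elements. (One could alternatively inspect the explicit block structure in Figure~\ref{fig:diag}, but the sortability argument is cleaner and uses only what we have proved.)

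For the third piece: the basis of $S(3,\infty)$ is by definition an antichain $\mathscr B$ with $S(3,\infty)=Av(\mathscr B)$, and it is the unique such antichain, consisting of all minimal non-sortable permutations. Since every $G_i$ is minimal non-sortable, $\mathscr G\subseteq\mathscr B$; as $\mathscr G$ is infinite, $\mathscr B$ is infinite. This proves the theorem. I expect the only subtlety worth spelling out — really the ``main obstacle'' in an otherwise formal wrap-up — is the observation that closure of $S(3,\infty)$ under subpermutations lets us pass from ``deleting one entry keeps it sortable'' to ``every proper subpermutation is sortable'', and hence to minimality; everything else is bookkeeping with the definition of basis.
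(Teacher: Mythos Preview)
Your proposal is correct and follows essentially the same approach as the paper: the paper's proof simply observes that Proposition~\ref{prop:Gi} gives non-sortability of each $G_i$, and Proposition~\ref{prop:GiBases} (together with closure under subpermutations) implies every proper subpermutation of $G_i$ is sortable, so no $G_i$ contains any $G_j$ with $j\neq i$ and $\mathscr G$ is an infinite antichain in the basis. Your write-up merely spells out in more detail the passage from ``deleting one entry keeps it sortable'' to ``every proper subpermutation is sortable'' and the length/containment dichotomy for the antichain property, but the argument is the same.
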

\begin{proof}
	Proposition~\ref{prop:Gi}  shows that each $G_i$ cannot be sorted, and Proposition~\ref{prop:GiBases} shows that no $G_i$ can contain $G_j$ for $j\neq i$ as a subpermutation since any subpermutation of $G_i$ can be sorted. 
	Thus $\mathscr G=\{G_{i}\mid i\in\N\}$ is 
	an infinite antichain in the basis for $S(3,\infty)$.
\qed\end{proof}

\section{From finite to infinitely based}

Let $\mathscr B_t$ be the basis for $S(t,\infty)$ for $t\in \N_+$.
Modifying Lemma 1 in \cite{Estacks} for the sorting case, we have the following:
\begin{lemma}\label{lem:basisk} If $\sigma\in\mathscr B_t$ has length $n$ then either  $\sigma$ or $(213)_n\sigma$
belongs to  $\mathscr B_{t+1}$. \end{lemma}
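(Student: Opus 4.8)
\textbf{Proof proposal for Lemma~\ref{lem:basisk}.}

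The plan is to show two things: first, that if $\sigma\in\mathscr B_t$ then $\sigma\notin S(t+1,\infty)$ or else some controlled modification of $\sigma$ fails to be sorted by $M_{t+1}$; and second, that the resulting permutation is actually \emph{minimal} among non-sortable permutations, i.e.\ every proper subpermutation is in $S(t+1,\infty)$. The starting observation is that $\mathscr B_{t+1}$-membership of a permutation $\tau$ is equivalent to: $\tau\notin S(t+1,\infty)$, but every permutation obtained from $\tau$ by deleting one entry lies in $S(t+1,\infty)$. Since $\sigma\in\mathscr B_t$ means $\sigma\notin S(t,\infty)$ and every one-entry deletion of $\sigma$ is in $S(t,\infty)\subseteq S(t+1,\infty)$, the deletion-closure condition is ``almost free'' for $\sigma$ itself; the only thing that can go wrong is that $\sigma\in S(t+1,\infty)$ (a depth-$t{+}1$ stack genuinely helps). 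So the first case of the dichotomy is: if $\sigma\notin S(t+1,\infty)$, then already $\sigma\in\mathscr B_{t+1}$, and we are done.

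For the second case, suppose $\sigma\in S(t+1,\infty)$. I would then analyze why $\sigma$ can be sorted with depth $t{+}1$ but not with depth $t$: in \emph{any} sorting process for $\sigma$ on $M_{t+1}$, stack $R$ must at some moment hold exactly $t+1$ items (otherwise the same process works on $M_t$). The idea behind prepending $(213)_n$ — three new entries larger than everything in $\sigma$, arranged in the relative order $2,1,3$ — is that these three large values must be dealt with before any entry of $\sigma$ can be output, and sorting them forces one extra ``slot'' of stack $R$ to be permanently occupied while $\sigma$ is processed, effectively reducing the usable depth from $t+1$ back to $t$. Concretely: I would argue that in any sorting process for $(213)_n\sigma$ on $M_{t+1}$, at the moment the entry labelled $n+1$ (the ``$1$'' of the $213$ block, the smallest of the three large entries, hence the first of them that must be output) is sitting in the system, one of the other two large entries must still occupy stack $R$ — using Lemma~\ref{lem:Lrelated} to prevent the larger-valued entries from being stacked correctly in $L$, in the spirit of the proof of Lemma~\ref{lem:67R}. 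This shows the sorting of $\sigma$ proceeds with only $t$ free cells of $R$, contradicting $\sigma\notin S(t,\infty)$; hence $(213)_n\sigma\notin S(t+1,\infty)$.

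It then remains to check the deletion-closure: every permutation obtained from $(213)_n\sigma$ by removing one entry lies in $S(t+1,\infty)$. If the removed entry is one of the three prepended large entries, the remaining two can be routed so that they do not tie up an extra cell of $R$ (two entries in relative order $21$, or $13$, or $23$ can be handled without the obstruction above), and then the original depth-$t$ sorting process for $\sigma$ — which exists since $\sigma\in S(t,\infty)$, wait, rather since deleting from the prefix leaves all of $\sigma$ intact and $\sigma\in S(t+1,\infty)$ — can be run; one must just exhibit an explicit interleaving, analogous to the routing lemmas (Lemmas~\ref{lem:can_sort1},~\ref{lem:can_sort2}) in the previous section. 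If instead the removed entry lies in the $\sigma$ part, then what remains is $(213)_n\sigma'$ where $\sigma'$ is a one-entry deletion of $\sigma$, so $\sigma'\in S(t,\infty)$; here I would show that with $\sigma'$ sortable by $M_t$, the three large entries \emph{can} be accommodated by the one extra cell of $R$ (park the ``$2$'' and later the ``$3$'' appropriately while the depth-$t$ process for $\sigma'$ runs in the remaining $t$ cells), giving a valid $M_{t+1}$ sorting. The main obstacle is this last verification — making precise the claim that prepending $213$ (as opposed to, say, a single large entry or a different 3-pattern) costs \emph{exactly} one cell of stack $R$: one direction needs the forcing argument via Lemma~\ref{lem:Lrelated}, and the other needs an explicit scheduling showing one spare cell suffices. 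Both are finite case checks on the behavior of three large entries in a bounded stack, and I expect them to go through cleanly, mirroring the corresponding lemma in~\cite{Estacks}.
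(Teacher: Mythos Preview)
Your proposal is correct and follows essentially the same route as the paper's proof: the same dichotomy on whether $\sigma\in S(t+1,\infty)$, the same mechanism (the three prefix entries cannot all sit in stack~$L$, hence at least one permanently occupies a cell of~$R$, reducing the effective depth to~$t$), and the same two-part deletion check (removing a prefix entry lets the remaining two go to~$L$ in order so that $\sigma\in S(t+1,\infty)$ finishes the sort; removing a $\sigma$-entry gives $\sigma'\in S(t,\infty)$, which is sorted while one prefix entry parks in~$R$). Your slightly sharper phrasing---that whenever $n+1$ is in the system one of $n+2,n+3$ is in~$R$---is equivalent to the paper's ``not all three in~$L$'' observation. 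The references to Lemmas~\ref{lem:can_sort1} and~\ref{lem:can_sort2} are unnecessary here (those are about the specific antichain, whereas the routing needed for this lemma is a direct two-line check), and your mid-sentence self-correction about $\sigma\in S(t,\infty)$ versus $\sigma\in S(t+1,\infty)$ lands on the right hypothesis.
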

\begin{proof}
If $\sigma\not\in S(t+1,\infty)$ then since $\sigma\in \mathscr B_t$, deleting any entry gives a permutation  in $S(t,\infty)\subseteq S(t+1,\infty)$, so $\sigma\in\mathscr B_{t+1}$.
Else $\sigma\in S(t+1,\infty)$. In  any sorting process for $(213)_n\sigma$ the entries $1_n,2_n,3_n$ cannot appear together in stack $L$, so at least one entry must remain in stack $R$ which means we must sort $\sigma$ with stack $R$ of depth at most $t$, which is not possible, so 
$(213)_n\sigma$ cannot be sorted. If we remove an entry of the prefix then the two entries $a,b\in \{1_n,2_n,3_n\}$ can be placed in stack $L$ in order, leaving stack $R$ depth $t+1$ so the permutation can be sorted, and if an entry is removed from $\sigma$ then since $\sigma\in \mathscr B_t$ it can be sorted with $R$ having one space occupied.
\qed\end{proof}

\begin{theorem}
	\label{thm:LongestHeightSortingMachine}
	The set of permutations that can be sorted using a stack of depth $t\in\N_+$ and an infinite stack in series is finitely based if and only if $t\in\{1,2\}$.
\end{theorem}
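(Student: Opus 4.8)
The plan is to combine the two halves of the paper. For $t=1$ the machine is two stacks where the first has depth one; Knuth-type arguments (or a direct check) give a finite basis, and for $t=2$ the result of~\cite{Estacks} gives an explicit basis of $20$ permutations, so both these cases are finitely based. It remains to show that for every $t\geq 3$ the basis $\mathscr B_t$ is infinite, and here the idea is to bootstrap from Theorem~\ref{thm:GiBases} using Lemma~\ref{lem:basisk}.

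First I would record the base case $t=3$: Theorem~\ref{thm:GiBases} exhibits the infinite antichain $\mathscr G=\{G_i\mid i\in\N\}\subseteq\mathscr B_3$, so $\mathscr B_3$ is infinite. Then I would induct on $t$. Suppose $\mathscr B_t$ is infinite for some $t\geq 3$. Lemma~\ref{lem:basisk} says that for each $\sigma\in\mathscr B_t$ of length $n$, at least one of $\sigma$ or $(213)_n\sigma$ lies in $\mathscr B_{t+1}$. This gives a map from $\mathscr B_t$ into $\mathscr B_{t+1}$; the key point is that this map cannot collapse an infinite set to a finite set. Indeed, lengths are (weakly) increasing under the map — $\sigma$ has length $n$ and $(213)_n\sigma$ has length $n+3$ — and only finitely many elements of $\mathscr B_t$ share any given length (since $\mathscr B_t\subseteq S^\infty$ and each $S_n$ is finite), and moreover each image permutation of length $m$ has at most one preimage of length $m$ and at most one of length $m-3$. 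Hence the image of $\mathscr B_t$ in $\mathscr B_{t+1}$ is infinite, so $\mathscr B_{t+1}$ is infinite. This completes the induction, giving that $\mathscr B_t$ is infinite for all $t\geq 3$, and the theorem follows.

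The only genuinely delicate step is verifying that the correspondence $\sigma\mapsto\{\sigma,(213)_n\sigma\}$ furnished by Lemma~\ref{lem:basisk} really produces infinitely many distinct elements of $\mathscr B_{t+1}$ rather than, say, many different $\sigma$'s all mapping to the same basis element. This is handled by the finite-fibre observation above: distinct $\sigma,\sigma'\in\mathscr B_t$ can map to the same $\tau\in\mathscr B_{t+1}$ only if $\{\sigma,(213)_n\sigma\}\cap\{\sigma',(213)_{n'}\sigma'\}\neq\emptyset$, which by comparing lengths and prefixes forces $\sigma$ and $\sigma'$ to have lengths differing by $3$ with the longer one obtained from the shorter by prepending $(213)$; so every $\tau\in\mathscr B_{t+1}$ is hit by at most two elements of $\mathscr B_t$, and a two-to-one image of an infinite set is infinite.

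Alternatively — and perhaps more cleanly — one can avoid the fibre bookkeeping entirely by starting from the concrete antichain: apply Lemma~\ref{lem:basisk} directly to each $G_i\in\mathscr B_3$ to obtain permutations $H_i^{(t)}\in\mathscr B_t$ (for $t\geq 3$) built from $G_i$ by prepending $t-3$ blocks of the form $(213)_{\bullet}$, note that these $H_i^{(t)}$ have strictly increasing lengths as $i$ grows (since the $G_i$ do), hence are pairwise distinct, and conclude $\mathscr B_t$ is infinite. Either route reduces the theorem to the already-established Theorem~\ref{thm:GiBases}, the cited case $t=2$, and the easy case $t=1$; I would then simply assemble these three facts into the stated if-and-only-if.

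\begin{proof}
For $t=1$ the machine consists of a stack of depth $1$ followed by an infinite stack; a routine analysis (or direct computation) shows $S(1,\infty)$ has a finite basis. For $t=2$, \cite{Estacks} shows $S(2,\infty)$ has a basis of $20$ permutations. So $\mathscr B_t$ is finite for $t\in\{1,2\}$.

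Now suppose $t\geq 3$; we show $\mathscr B_t$ is infinite by induction on $t$. For $t=3$ this is Theorem~\ref{thm:GiBases}, which exhibits the infinite antichain $\mathscr G=\{G_i\mid i\in\N\}\subseteq\mathscr B_3$. Assume $\mathscr B_t$ is infinite for some $t\geq 3$. By Lemma~\ref{lem:basisk}, for each $\sigma\in\mathscr B_t$ of length $n$ at least one of $\sigma$ or $(213)_n\sigma$ lies in $\mathscr B_{t+1}$; choose one such and call it $f(\sigma)$. If $\tau\in\mathscr B_{t+1}$ has length $m$, then $f(\sigma)=\tau$ forces $\sigma$ to have length $m$ (with $\sigma=\tau$) or length $m-3$ (with $(213)_{m-3}\sigma=\tau$, so $\sigma$ is determined by $\tau$). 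Hence $f$ is at most two-to-one, so $f(\mathscr B_t)\subseteq\mathscr B_{t+1}$ is infinite, and thus $\mathscr B_{t+1}$ is infinite. By induction $\mathscr B_t$ is infinite for all $t\geq 3$.

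Therefore $S(t,\infty)$ is finitely based if and only if $t\in\{1,2\}$.
\qed\end{proof}
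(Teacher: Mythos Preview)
Your proof is correct and follows the same approach as the paper: cite the finite bases for $t=1,2$, invoke Theorem~\ref{thm:GiBases} for $t=3$, and then induct using Lemma~\ref{lem:basisk}. The only difference is that you spell out the finite-fibre argument showing the map $\sigma\mapsto f(\sigma)\in\{\sigma,(213)_n\sigma\}$ is at most two-to-one, whereas the paper simply asserts that Lemma~\ref{lem:basisk} implies $\mathscr B_t$ infinite $\Rightarrow$ $\mathscr B_{t+1}$ infinite.
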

\begin{proof} 
We have $|\mathscr B_1|=1$ and  $|\mathscr B_2|=20$  \cite{Knuth,Estacks}.
Theorem~\ref{thm:GiBases} shows that $\mathscr B_3$ is infinite. Lemma~\ref{lem:basisk} implies if $\mathscr B_t$ is infinite then so is $\mathscr B_{t+1}$.   
\qed\end{proof}

A small modification of 
Propositions~\ref{prop:Gi} and \ref{prop:GiBases} shows that   for $t\geq 4$ the set $\mathscr G_t=\{G_{i,t}\}$, where 
$G_{i,t}= P(x_0y_0)\dots (x_iy_i) (14 \ 15 \ 16 \ \dots \ 12_t \ 11)_{6i},$
is an  explicit antichain in the basis of $S(t,\infty)$. Details can be seen in \cite{GohThesis}.

\bibliographystyle{splncs03}
\bibliography{refs}

\end{document}